\renewcommand\eqref[1]{(\ref{#1})}
\numberwithin{equation}{section}
\theoremstyle{plain}
\newtheorem{theorem}{Theorem}[section]
\newtheorem{corollary}{Corollary}[section]
\theoremstyle{definition}
\newtheorem{remark}{Remark}
\newtheorem{example}{Example}
\newcommand{\Rn}{\mathbb{R}^n}
\newtheorem*{assumptionsK}{Assumptions $K$}
\newtheorem*{assumptionsg}{Assumptions $g$}
\newtheorem*{assumptionsh}{Assumptions $h$}
\newtheorem*{assumptionsG}{Assumptions $G$}
\newtheorem*{assumptionsSt}{Assumptions $\lambda_\pm=0$}
\newtheorem*{assumptionssubSt}{Assumptions $\lambda_\pm>0$}
\newtheorem*{assumptionsgAlt}{Assumptions $\tilde g$}
\newtheorem*{assumptionsGAlt}{Assumptions $\tilde G$}
\newtheorem*{assumptionsStAlt}{Assumptions $\widetilde{\lambda_\pm=0}$}
\newtheorem*{assumptionssubStAlt}{Assumptions $\widetilde{\lambda_\pm>0}$}
\renewcommand{\ae}{{\mathrm{a.e.}\;}}
\title{Parabolic equations with concave non-linearity}
\author[Z. Avetisyan]{Zhirayr Avetisyan}
\address{
	Zhirayr Avetisyan:
	\endgraf
    Department of Mathematics: Analysis, Logic and Discrete Mathematics
    \endgraf
    Ghent University, Belgium
  	\endgraf
	{\it E-mail address} {\rm zhirayr.avetisyan@ugent.be}
		}
\author[Kh. A. Khachatryan]{Khachatur Khachatryan}
\address{
  Khachatur Khachatryan:
  \endgraf
   \endgraf
  Chair of Theory of Functions  and Differential Equations
    \endgraf
  Faculty of Mathematics, Yerevan State University, Armenia
  \endgraf
	{\it E-mail address} {\rm khachatur.khachatryan@ysu.am}}
\author[M. Ruzhansky]{Michael Ruzhansky}
\address{
  Michael Ruzhansky:
  \endgraf
  Department of Mathematics: Analysis, Logic and Discrete Mathematics
  \endgraf
  Ghent University, Belgium
  \endgraf
 and
  \endgraf
  School of Mathematical Sciences
  \endgraf
  Queen Mary University of London
  \endgraf
  United Kingdom
  \endgraf
  {\it E-mail address} {\rm michael.ruzhansky@ugent.be}
  }
\begin{document}

\thanks{The first and third authors are supported by the FWO Odysseus 1 grant G.0H94.18N: Analysis and Partial Differential Equations and by the Methusalem programme of the Ghent University Special Research Fund (BOF) (Grant number 01M01021). Michael Ruzhansky is also supported by EPSRC grant EP/V005529/1 and FWO Senior
Research Grants G011522N and G022821N. The research of the second author was supported by the Science Committee of RA (Research project no. 23RL-1A027) \\
\indent
{\it Keywords:} Semilinear parabolic PDE, partial differential equations, integral equations, concave non-linearity \\
\indent
{\it AMS MSC 2020:} 35K58 Semilinear parabolic equations; 45G99 Nonlinear integral equations;
}

\begin{abstract}
In this paper we prove the existence and uniqueness of positive mild solutions for the
semilinear parabolic equations of the form $u_t+\mathcal{L}u=f+h\cdot G(u)$, where $h$ is a positive function and $G$ a positive concave function (for example, $G(u)=u^\alpha$ for $0<\alpha<1$). In contrast with the case of convex $G$, where the Fujita exponent appears, and only existence of a positive solution for special data is achieved, in this concave case we obtain the existence and uniqueness for any positive data.
The method relies on proving the existence and uniqueness of solutions for a certain class of Hammerstein-Volterra-type integral equations with a concave non-linear term, in the very general setting of arbitrary measure spaces. As a consequence, we obtain results for the existence and uniqueness of mild solutions to semilinear parabolic equations with concave nonlinearities under rather general assumptions on $f, h$ and $G$, in a variety of settings: e.g. for $\mathcal{L}$ being the Laplacian on complete Riemannian manifolds (e.g., symmetric spaces of any rank) or H\"ormander sum of squares on general Lie groups. We also study the corresponding equations in the presence of a damping term, in which case assumptions can be relaxed even further.
\end{abstract}

\maketitle

\tableofcontents

\section{Introduction}

In this paper, we are interested in the following parabolic Cauchy problem,
\begin{equation}
\begin{cases}
\partial_t u(x,t)+\mathcal{L}u(x,t)=h(x,t)G(u(x,t))+f(x,t),\\
u(x,0)=u_0(x),
\end{cases}\label{CauchyProblem0}
\end{equation}
for an unknown non-negative function $u:X\times\mathbb{R}_+\to[0,+\infty)$, where $\mathbb{R}_+=(0,\infty)$, given non-negative initial data $u_0:X\to[0,+\infty)$.

Duhamel's principle allows one to reduce a semilinear parabolic equation to an integral equation, which can often be studied with different methods. Solutions to this integral equation are often referred to as mild solutions of the original parabolic equation. In this work, we study the existence and uniqueness of solutions to a certain class of Hammerstein-Volterra-type integral equations with a concave non-linear term, which will imply existence and uniqueness of non-negative mild solutions to \eqref{CauchyProblem0}, where $h$ is a positive function and $G$ a positive function satisfying certain concavity conditions.

More specifically, let $(X,\mu)$ be a measure space and let $\mathcal{L}:\mathcal{D}\to L^0(X)$ be a linear operator with domain $\mathcal{D}\subset L^0(X)$, $L^0(X)\doteq L^0(X,\mu)$ being the space of almost everywhere finite measurable functions. Let further $h:X\times\mathbb{R}_+\to\mathbb{R}_+$, $G:[0,+\infty)\to[0,+\infty)$ and $f:X\times\mathbb{R}_+\to\mathbb{R}$ be given functions. Thus, we study the so-called Hammerstein-Volterra-type integral equations of the form
\begin{equation}
u(x,t)=g(x,t)+\int\limits_0^t\int\limits_XK(x,y;t-s)h(y,s)G(u(y,s))d\mu(y)ds,\label{IntEq0}
\end{equation}
where $g$ is the function corresponding to the initial data $u_0$ in \eqref{CauchyProblem0}, and $K$ is the heat kernel for the operator $\mathcal{L}$ (see below for details).
The only assumptions that we will need on $K:X\times X\times\mathbb{R}_+\to [0,\infty)$  is that it is measurable and that there exist
$\lambda_-,\lambda_+$, with $\lambda_+\ge\lambda_-\ge0$, such that
\begin{equation}
e^{-\lambda_+t}\le\int\limits_XK(x,y;t)d\mu(y)\le e^{-\lambda_-t},\quad\ae x\in X,\quad\ae t\in\mathbb{R}_+.\label{KAssumptions_ii0}
\end{equation}
The case $\lambda_-=\lambda_+=0$ will be referred to as the stochastic case and corresponds to the situation when the heat kernel is a probability density. The case of
$\lambda_+\ge\lambda_->0$ in \eqref{KAssumptions_ii0} will be referred to as the substochastic case, and here our assumptions on $G$ and $h$ can be relaxed compared to the stochastic case. Thus, from the point of view of the operator $\mathcal{L}$, we cover the following situations:
\begin{itemize}
\item[(i)] $\mathcal{L}$ is the Laplace-Beltrami operator on a complete Riemannian manifold with Ricci curvature bounded from below. Or, for complete connected Riemannian manifolds without restrictions on the curvature, with volume growth bounded by $\exp(c\,r^2)$. These cases cover, for example, Riemannian symmetric spaces of non-compact type of general rank.
\item[(ii)] $\mathcal{L}$ is the H\"ormander sum of squares (sub-Laplacian) on a general Lie group.
\item[(iii)] $\mathcal{L}$ is the sub-Laplacian on sub-Rieman\-nian manifolds satisfying the so-called curvature-dimension inequalities. This includes sub-Riemannian manifolds with transverse symmetries.
\item[(iv)] $\mathcal L$ is a general densely defined linear operator on $L^2(X)$ for a general measure space $(X,\mu)$, such that the semigroup of measures corresponding to $e^{-t\mathcal{L}}$ is a family of uniformly bounded positive measures (e.g. probability measures, which is the stochastic case) which are absolutely continuous with respect to $\mu$ for a.e. $t$.
\end{itemize}
We refer to Section \ref{SEC:K} for a more detailed discussion of these settings. The substochastic case of
$\lambda_+\ge\lambda_->0$ in \eqref{KAssumptions_ii0} requires that the operator $\mathcal{L}$ has a spectral gap. This condition is necessary but not sufficient for substochasticity; for instance, the Laplacian on symmetric spaces of non-compact type has a spectral gap, but is still stochastic. The substochastic case is necessary in order to cover, for instance, the semilinear parabolic equation with a damping term. Namely, when we consider the equation with the stochastic assumption on $\mathcal{L}$ but we add a mass term to it,
\begin{equation}
\begin{cases}
\partial_t u(x,t)+\mathcal{L}u(x,t)+m u(x,t)=h(x,t)G(u(x,t))+f(x,t),\\
u(x,0)=u_0(x),
\end{cases}\label{CauchyProblem02}
\end{equation}
with a positive mass term $m>0$ (for $\mathcal L$ also positive). Denoting $\mathcal{L}'\doteq\mathcal{L}+m$ we obtain the original Cauchy problem (\ref{CauchyProblem0}), but now $\mathcal{L}'$ is substochastic with $\lambda_+=\lambda_-=m$.

Equations of the kind \eqref{CauchyProblem0} or \eqref{CauchyProblem02} with convex $G$ have been studied widely in the literature. The famous result in 1966 by Fujita \cite{fuj1} states that the Cauchy problem for the equation
\begin{equation}\label{Heat_Cauchyrn}
\begin{split}
& u_{t}(t,x)-\Delta u(t,x) =u^p(t,x),\,\, x\in \mathbb{R}^N,\, t>0, \\&
u(0,x)=u_0(x)\geq 0,\, x\in \mathbb{R}^N,
   \end{split}
\end{equation}
where $u_0\not\equiv 0$, has no global in time solution for $p$ in the range $1<p<p_F=1+\frac{2}{N}$, where  $p_F$ is the Fujita exponent. It was also shown in \cite{hayak2} and \cite{AW78} that this result holds also for $p = p_F$. Fujita also proved that for $p>p_F$ there exists a positive solution for some positive data. Moreover, it was shown by Weissler \cite{Wei81} that if $p>p_{F}$, and $u_{0}\in L^{\infty}(\Rn)$ is non-negative and  small, namely, satisfying
$$\int_{0}^{\infty}\|\text{e}^{-s\Delta}u_{0}\|_{L^{\infty}(\Rn)}^{p-1}ds<\frac{1}{p-1},$$
then there exists a non-negative global solution of \eqref{Heat_Cauchyrn}.
Thus, we see that in the case of convex nonlinearity like $u^p$ for $p>1$, we have the existence of a threshold between global existence and blow-up of solutions in terms of the order $p$.
An exposition of this type of results in the Euclidean settings can be found, e.g., in the books \cite{Soup} or \cite{Lev90}.
Equations of the kind \eqref{IntEq0} for convex nonlinearities have been also studied, as they often appear in applications (e.g. \cite{Diek} with applications to the analysis of the spread of infectious deceases).

There are many extensions of these results to other settings such as bounded and unbounded domains, Riemannian symmetric spaces, sub-Riemannian manifolds, or Lie groups.
For example, the Fujita exponent for general H\"ormander sums of squares on general unimodular Lie groups was obtained in \cite{RY}. There, the case of sub-Riemannian manifolds was also studied, and the analysis depends on estimates for the heat kernels. For the particular case of the Heisenberg groups  the problem was also intensively studied in the literature, see e.g. \cite{Kirane 1, Kirane 2, BRT22, Birindelli, D'Ambrosio, D'Ambrosio1, CKR24}.  In  \cite{CKR24}, the problem \eqref{CauchyProblem0} was considered on general unimodular Lie groups with
$h(x,t)=h(t)$ depending only on $t$ and one also obtained
the necessary and sufficient conditions for the global well-posedeness of the corresponding heat equation on the Heisenberg group $\mathbb{H}^1$, as well as sufficient conditions for the existence of positive solutions for small data on general unimodular Lie groups.

In all these cases of convex nonlinearities $G(u)$ in \eqref{CauchyProblem0}, one has the appearance of the threshold for the existence of global solutions for small data, and methods often depend on having suitable estimates for the heat kernels.
As we will show in this paper, for concave nonlinearities like $G(u)=u^\alpha$ for $0<\alpha<1$, there are no such thresholds, and also no smallness on the data is required. Moreover, no estimates on the heat kernel are needed, except for its non-negativity and stochasticity or substochasticity.

Conditions that we will impose on the nonlinearity $G(u)$ and function $h(x,t)$ in \eqref{CauchyProblem0} will be formulated precisely in the next sections. However, let us already give several examples, to illustrate the applicability of the obtained results. Thus, we can consider the following examples, where we also give the corresponding value of the function $\varphi$, the existence of which is part of the assumptions:

\begin{itemize}
    \item [G1.]  $G(u)=u^{\alpha}, \quad u\in [0,+\infty), \quad \alpha \in(0,1), \quad \varphi(\sigma)=\sigma^{\alpha},$
    \item [G2.]  $G(u)=\gamma (1-e^{-u^{\alpha}}), \quad u\in [0, +\infty), \quad \alpha \in(0,1),\quad \gamma>1, \quad \varphi(\sigma)=\sigma^{\alpha},$
    \item [$\Lambda$1.] $h(x,t)=p_1(t)(1-\lambda_0(x))+p_2(t)\lambda_0(x), \quad \forall x\in X, \quad \forall t\in \mathbb{R}_+,$ where $0\leq \lambda_0(x)\leq 1, \quad \forall x\in X.$
\end{itemize}

Note that the examples G1 and G2 above satisfy all assumptions regarding the function $G$, namely, {\normalfont\textbf{Assumptions}} \hyperref[GAssumptionsGeneric]{$G$}, the Assumptions (\ref{ExtraAssumptionsl=0_iv}), (\ref{ExtraAssumptionsl=0_v}), (\ref{ExtraAssumptionsl=0_vi}), and the Assumption (\ref{ExtraAssumptionsl>0_ii}) for any choice of $\lambda_->0$, $\beta>0$ and $\gamma>0$. The example $\Lambda$1 satisfies {\normalfont\textbf{Assumptions}} \hyperref[l*AssumptionsGeneric]{$h$} and the Assumptions (\ref{ExtraAssumptionsl=0_i}), (\ref{ExtraAssumptionsl=0_ii}) and (\ref{ExtraAssumptionsl=0_iii}) for the right choice of the functions $p_1$,$p_2$. The satisfaction of the Assumption (\ref{ExtraAssumptionsl>0_i}) by the example $\Lambda$1 is an easy matter of choosing $p_1$, $p_2$ and $\lambda_0$.
Precise conditions on more general $G$ and $h$ will be formulated in the next sections.

\section{The setting}

Let $(X,\mu)$ be a measure space and let $\mathcal{L}:\mathcal{D}\to L^0(X)$ be a linear operator with domain $\mathcal{D}\subset L^0(X)$, $L^0(X)\doteq L^0(X,\mu)$ being the space of almost everywhere finite measurable functions. Let further $h:X\times\mathbb{R}_+\to\mathbb{R}_+$, $G:[0,+\infty)\to[0,+\infty)$ and $f:X\times\mathbb{R}_+\to\mathbb{R}$ be given functions, where $\mathbb{R}_+=(0,\infty)$. Formally, we are interested in the following parabolic Cauchy problem,
\begin{equation}
\begin{cases}
\partial_t u(x,t)+\mathcal{L}u(x,t)=h(x,t)G(u(x,t))+f(x,t),\\
u(x,0)=u_0(x),
\end{cases}\label{CauchyProblem}
\end{equation}
for an unknown non-negative function $u:X\times\mathbb{R}_+\to[0,+\infty)$, given non-negative initial data $u_0:X\to[0,+\infty)$. Let us introduce the auxiliary function $g:X\times\mathbb{R}_+\to\mathbb{R}$ as a formal solution of the auxiliary Cauchy problem
\begin{equation}
\begin{cases}
\partial_t g(x,t)+\mathcal{L}g(x,t)=f(x,t),\\
g(x,0)=u_0(x).
\end{cases}\label{AuxCauchyProblem}
\end{equation}
This problem can be solved by Duhamel's principle. Note that if $u_0\ge0$ and $f\ge0$ then $g\ge0$, provided that the heat kernel of $\mathcal{L}$ exists and is non-negative. Then it is not difficult to check that every formal solution of the integral equation
\begin{equation}
u(x,t)=g(x,t)+\int\limits_0^te^{-(t-s)\mathcal{L}}\left[h(\cdot,s)G(u(\cdot,s))\right](x)ds\label{IntEqFormal}
\end{equation}
is also a formal solution of the Cauchy problem (\ref{CauchyProblem}). Following the tradition (e.g., \cite{Bro64}), we will call solutions of (\ref{IntEqFormal}) \textit{mild} solutions of the Cauchy problem (\ref{CauchyProblem}). Normally, additional work and assumptions are needed to rigorously produce strong solutions from mild solutions. However, mild solutions are of their own interest, and in this paper we will concentrate on the solutions of the integral formulation \eqref{IntEqFormal}.

If we assume, still formally, that the heat semigroup $\{e^{-t\mathcal{L}}\}_{t\in\mathbb{R}_+}$ admits a locally integrable kernel $K:X\times X\times\mathbb{R}_+\to\mathbb{R}$ such that, at least formally,
$$
e^{-t\mathcal{L}}h(x)=\int\limits_XK(x,y;t)h(y)d\mu(y),
$$
then the equation (\ref{IntEqFormal}) can be written as a proper integral equation,
\begin{equation}
u(x,t)=g(x,t)+\int\limits_0^t\int\limits_XK(x,y;t-s)h(y,s)G(u(y,s))d\mu(y)ds.\label{IntEq}
\end{equation}
This Hammerstein-Volterra-type integral equation, together with its solutions, will be the central subject of the present work.

Below we will obtain existence and uniqueness results for the solutions of the equation (\ref{IntEq}). In order to do so, we will need to impose assumptions on mathematical objects appearing in that equation. For the integral kernel $K$ we will propose the following assumptions.

\begin{assumptionsK}[Heat kernel $K$]\label{KAssumptions} We assume that
\begin{itemize}

\item[(i)]
\begin{equation}
K:X\times X\times\mathbb{R}_+\to[0,+\infty)\quad\mbox{ is measurable};\label{KAssumptions_i}
\end{equation}

\item[(ii)] $\exists\lambda_-,\lambda_+$, with $\lambda_+\ge\lambda_-\ge0$, such that
\begin{equation}
e^{-t\lambda_+}\le\int\limits_XK(x,y;t)d\mu(y)\le e^{-t\lambda_-},\quad\ae x\in X,\quad\ae t\in\mathbb{R}_+.\label{KAssumptions_ii}
\end{equation}

\end{itemize}
\end{assumptionsK}
The case $\lambda_-=\lambda_+=0$ will be referred to as the stochastic case, and $\lambda_+\ge\lambda_->0$ as the substochastic case. These two cases will be dealt with in slightly different ways. More precisely, $\lambda_->0$ provides additional convenience, which allows to ease the assumptions for solvability.

For the inhomogeneous contribution $g$ in the equation (\ref{IntEq}) we will assume some or all of the following conditions.

\begin{assumptionsg}[Term $g$ in (\ref{IntEq})]\label{gAssumptions} We assume that
\begin{itemize}

\item[(i)]
\begin{equation}
g:X\times\mathbb{R}_+\to[0,+\infty)\quad\mbox{is measurable};\label{gAssumptions_i}
\end{equation}

\item[(ii)] $\exists\,T_0>0$ such that
\begin{equation}
\inf_{(x,t)\in X\times (0,T_0)}g(x,t)\doteq\beta_0>0;\label{gAssumptions_ii}
\end{equation}

\item[(iii)]
\begin{equation}
g\in L^\infty(X\times\mathbb{R}_+).\label{gAssumptions_iii}
\end{equation}

\end{itemize}
\end{assumptionsg}

For the function $h$ we adopt the following generic assumptions.

\begin{assumptionsh}[Term $h$ in (\ref{IntEq})]\label{l*AssumptionsGeneric} We assume that
\begin{itemize}

\item[(i)]
\begin{equation}
h:X\times\mathbb{R}_+\to[0,+\infty)\quad\mbox{is measurable};\label{l*AssumptionsGeneric_i}
\end{equation}

\end{itemize}
\end{assumptionsh}
We will impose additional conditions on the function $h$ depending on whether $\lambda_\pm=0$ or not in {\normalfont\textbf{Assumptions}} \hyperref[KAssumptions]{$K$}.

Regarding the function $G$ in the equation (\ref{IntEq}) we assume the following generic conditions.

\begin{assumptionsG}[Non-linearity $G$]\label{GAssumptionsGeneric} We assume that
\begin{itemize}

\item[(i)]
\begin{equation}
G\in C([0,+\infty),[0,+\infty));\label{GAssumptionsGeneric_i}
\end{equation}

\item[(ii)]
\begin{equation}
G(0)=0,\quad G\quad\mbox{is strictly increasing and concave}.\label{GAssumptionsGeneric_ii}
\end{equation}

\end{itemize}
\end{assumptionsG}
Again, depending on whether $\lambda_\pm=0$ or not in {\normalfont\textbf{Assumptions}} \hyperref[KAssumptions]{$K$}, there will be additional assumptions on the function $G$ below.

\section{Existence of solutions}
\subsection{The stochastic case: $\lambda_\pm=0$}

Suppose that $\lambda_\pm=0$ in {\normalfont\textbf{Assumptions}} \hyperref[KAssumptions]{$K$}, which we called the stochastic case. Here we will demand the following additional assumptions on the functions $h$ and $G$.

\begin{assumptionsSt}[Additional]\label{ExtraAssumptionsl=0} We assume that $\exists\,p_1,p_2\in C(\mathbb{R}_+,\mathbb{R}_+)$ such that
\begin{itemize}

\item[(i)]
\begin{equation}
p_1\not\equiv p_2,\quad\int\limits_0^\infty p_2(t)dt=1;\label{ExtraAssumptionsl=0_i}
\end{equation}

\item[(ii)]
\begin{equation}
\lim_{t\to0+}\,\frac{p_1(t)}{p_2(t)}>0;\label{ExtraAssumptionsl=0_ii}
\end{equation}

\item[(iii)]
\begin{equation}
p_1(t)\le h(x,t)\le p_2(t),\quad\ae x\in X,\quad\ae t\in\mathbb{R}_+.\label{ExtraAssumptionsl=0_iii}
\end{equation}

\end{itemize}
Suppose $\exists\,\varphi:[0,1]\to[0,1]$ homeomorphism such that
\begin{itemize}

\item[(iv)]
\begin{equation}
\varphi(0)=0,\quad\varphi(1)=1,\quad\varphi\quad\mbox{is concave};\label{ExtraAssumptionsl=0_iv}
\end{equation}

\item[(v)] $\exists\,\xi>0$ such that
\begin{equation}
\xi-G(\xi)=\|g\|_{L^\infty(X\times\mathbb{R}_+)};\label{ExtraAssumptionsl=0_v}
\end{equation}

\item[(vi)]
\begin{equation}
G(\sigma u)\ge\varphi(\sigma)G(u),\quad\forall\sigma\in[0,1],\quad\forall u\in[0,\xi].\label{ExtraAssumptionsl=0_vi}
\end{equation}

\end{itemize}
\end{assumptionsSt}

\begin{remark} Note that if the function $G$ is so-called strongly concave, i.e.,
$$
\lim_{u\to+\infty}\frac{G(u)}{u}=0,
$$
then the Assumption (\ref{ExtraAssumptionsl=0_v}) is automatically satisfied.
\end{remark}

The first existence result is the following

\begin{theorem}\label{Existencel=0Theorem} Suppose that the following assumptions hold:
\begin{itemize}

\item {\normalfont\textbf{Assumptions}} \hyperref[KAssumptions]{$K$} hold with $\lambda_\pm=0$;

\item {\normalfont\textbf{Assumptions}} $g$: \hyperref[gAssumptions_i]{(i)} and \hyperref[gAssumptions_iii]{(iii)};

\item {\normalfont\textbf{Assumptions}} \hyperref[l*AssumptionsGeneric]{$h$};

\item {\normalfont\textbf{Assumptions}} \hyperref[GAssumptionsGeneric]{$G$};

\item {\normalfont\textbf{Assumptions}} $\lambda_\pm=0$: \hyperref[ExtraAssumptionsl=0_i]{(i)}, \hyperref[ExtraAssumptionsl=0_iii]{(iii)} and \hyperref[ExtraAssumptionsl=0_v]{(v)}.

\end{itemize}
Then
$$
\exists\,u\in L^\infty(X\times\mathbb{R}_+,[0,+\infty)),
$$
such that $u$ satisfies the equation (\ref{IntEq}). Moreover, $u$ is the almost everywhere limit of the decreasing sequence $\{u_m\}_{m=1}^\infty$ of non-negative measurable functions, given by following iterative approximations,
$$
u_{m+1}(x,t)\doteq g(x,t)+\int\limits_0^t\int\limits_XK(x,y;t-s)h(y,s)G(u_m(y,s))d\mu(y)ds,
$$
$$
u_0(x,t)\doteq\xi-\beta+g(x,t),\quad\forall m\in\{0\}\cup\mathbb{N},\quad\ae x\in X,\quad\ae t\in\mathbb{R}_+,
$$
with $\beta\doteq\|g\|_{L^\infty(X\times\mathbb{R}_+)}$. If, in addition, the following assumptions hold:
\begin{itemize}

\item {\normalfont\textbf{Assumptions}} $g$: \hyperref[gAssumptions_ii]{(ii)};

\item {\normalfont\textbf{Assumptions}} $\lambda_\pm=0$: \hyperref[ExtraAssumptionsl=0_ii]{(ii)}, \hyperref[ExtraAssumptionsl=0_iv]{(iv)} and \hyperref[ExtraAssumptionsl=0_vi]{(vi)},

\end{itemize}
then there exist $C>0$ and $k\in(0,1)$ such that
$$
0\le u_{m+1}(x,t)-u(x,t)\le C\,k^m,\quad\ae x\in X,\quad\ae t\in\mathbb{R}_+,\quad\forall m\in\mathbb{N}.
$$
\end{theorem}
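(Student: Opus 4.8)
The plan is to handle the two halves of the statement separately: the bare existence of $u$ by a monotone super-solution iteration together with a dominated-convergence passage to the limit, and the geometric rate $u_{m+1}-u\le Ck^m$ by a Volterra--Gronwall estimate for the errors $u_m-u$, once a uniform positive lower bound for $u$ is available.

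\emph{Existence.} First I would record from \eqref{ExtraAssumptionsl=0_v} that $\xi-\beta=G(\xi)\ge 0$ (since $G\ge0$, $G(0)=0$), so $u_0=\xi-\beta+g\ge0$, and also $u_0\le\xi$ a.e.\ because $g\le\|g\|_{L^\infty}=\beta$. The key point is that $u_0$ is a super-solution, i.e.\ $u_1\le u_0$: in the integral defining $u_1$ one bounds $G(u_0)\le G(\xi)$ (monotonicity of $G$), $h\le p_2$ by \eqref{ExtraAssumptionsl=0_iii}, and $\int_X K(x,y;t-s)\,d\mu(y)\le 1$ by \eqref{KAssumptions_ii} with $\lambda_\pm=0$, obtaining $u_1(x,t)\le g(x,t)+G(\xi)\int_0^t p_2(s)\,ds\le g(x,t)+G(\xi)=u_0(x,t)$ since $\int_0^\infty p_2=1$. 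Then a straightforward induction on $m$ — using only monotonicity of $G$ and $K,h\ge0$, via $u_{m+1}-u_m=\int_0^t\int_X K\,h\,[G(u_m)-G(u_{m-1})]\,d\mu\,ds$ — shows that $\{u_m\}$ is decreasing with $0\le g\le u_{m+1}\le u_m\le u_0\le\xi$ a.e. Hence $u_m(x,t)\downarrow u(x,t)$ for a.e.\ $(x,t)$, $u$ is measurable, and $u\in L^\infty(X\times\mathbb{R}_+,[0,+\infty))$ with $\|u\|_\infty\le\xi$. Passing to the limit in the recursion is dominated convergence: $G(u_m)\to G(u)$ a.e.\ by continuity of $G$, and the integrand is dominated by $K(x,y;t-s)h(y,s)G(\xi)$, whose double integral over $(0,t)\times X$ is at most $G(\xi)\int_0^t p_2\le G(\xi)<\infty$; hence $u$ solves \eqref{IntEq}.

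\emph{A uniform lower bound for $u$ (the crux).} This is where Assumptions $g$\,\eqref{gAssumptions_ii} and $\lambda_\pm=0$\,\eqref{ExtraAssumptionsl=0_ii} enter, and I expect it to be the main obstacle. On $X\times(0,T_0)$ one has $u\ge g\ge\beta_0$ directly. For $t\ge T_0$, keep only the contribution of $s\in(0,T_0)$ to the double integral in \eqref{IntEq}, bound $h\ge p_1$, $G(u(y,s))\ge G(\beta_0)$, and use $\int_X K(x,y;t-s)\,d\mu(y)=1$ (the stochastic case of \eqref{KAssumptions_ii}); this gives $u(x,t)\ge G(\beta_0)\int_0^{T_0}p_1(s)\,ds$. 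The last integral is strictly positive: \eqref{ExtraAssumptionsl=0_ii} together with $p_2>0$ near $0$ forces $p_1>0$ on some interval $(0,\delta)$, and $G(\beta_0)>0$ since $G$ is strictly increasing. Thus $u\ge a\doteq\min\{\beta_0,\ G(\beta_0)\int_0^{T_0}p_1\}>0$ a.e. Without separating $u$ from $0$ the concave $G$ has unbounded difference quotients near the limit and no contraction would be available; this is precisely why these hypotheses are imposed.

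\emph{The geometric rate.} Set $w_m\doteq u_m-u\ge0$; it is decreasing, and $w_0=\xi-\beta+g-u\le\xi-\beta\doteq B$ a.e.\ since $u\ge g$. Subtracting \eqref{IntEq} from the recursion, $w_{m+1}(x,t)=\int_0^t\int_X K\,h\,[G(u_m)-G(u)]\,d\mu\,ds$. Since $a\le u\le u_m\le\xi$, write $u=\sigma u_m$ with $\sigma=u/u_m\in[a/\xi,1]$; by \eqref{ExtraAssumptionsl=0_vi} and $\varphi(\sigma)\ge\sigma$ (which follows from concavity of $\varphi$ with $\varphi(0)=0$, $\varphi(1)=1$, \eqref{ExtraAssumptionsl=0_iv}) we get $G(u)\ge\sigma G(u_m)$, hence $G(u_m)-G(u)\le(1-\sigma)G(u_m)=\frac{u_m-u}{u_m}G(u_m)\le\frac{G(\xi)}{a}\,w_m$. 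Writing $\kappa\doteq G(\xi)/a$ and $\omega_m(t)\doteq\|w_m(\cdot,t)\|_{L^\infty(X)}$, and bounding $h\le p_2$, $\int_X K\,d\mu\le1$, this yields the Volterra inequality $\omega_{m+1}(t)\le\kappa\int_0^t p_2(s)\,\omega_m(s)\,ds$ with $\omega_0\le B$. Iterating and using $P(t)\doteq\int_0^t p_2\le\int_0^\infty p_2=1$ gives, by induction, $\omega_m(t)\le B\,\kappa^m P(t)^m/m!\le B\,\kappa^m/m!$. Since $\kappa^m/m!\to0$ faster than any geometric sequence, for any chosen $k\in(0,1)$ there is $C>0$ with $B\,\kappa^m/m!\le Ck^m$ for all $m$; recalling $u_{m+1}-u=w_{m+1}$ and relabelling the constant, $0\le u_{m+1}(x,t)-u(x,t)\le Ck^m$ a.e., as claimed.
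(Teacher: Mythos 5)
Your proposal is correct, and while the existence half coincides with the paper's argument (monotone decreasing iteration from the supersolution $u_0=\xi-\beta+g$, using $G(\xi)=\xi-\beta$, $h\le p_2$, $\int_0^\infty p_2=1$ and stochasticity of $K$, then passage to the limit — the paper invokes B.~Levi's theorem where you invoke dominated convergence, which is immaterial here), the rate half takes a genuinely different route. The paper first proves the two-sided ratio bound $\sigma^* v_1\le v_2\le v_1$ for $v_m=u_m-g$ (via the auxiliary function $\mathfrak{L}$ in \eqref{Eq13}, whose positivity at $t\to0^+$ is exactly where Assumption \eqref{ExtraAssumptionsl=0_ii} enters through L'H\^opital), then propagates it as $\varphi^{(m)}(\sigma^*)v_{m+1}\le v_{m+2}\le v_{m+1}$ using \eqref{ExtraAssumptionsl=0_vi}, and finally converts the iterated $\varphi$'s into a geometric rate via the inequality $\varphi^{(m)}(\sigma^*)\ge k^m\sigma^*+1-k^m$ imported from \cite{khach}. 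You instead extract a uniform lower bound $u\ge a>0$ (from $g\ge\beta_0$ on $(0,T_0)$ and the kernel's mass on $[T_0,\infty)$), observe that concavity with $G(0)=0$ gives the local Lipschitz bound $G(u_m)-G(u)\le\frac{G(u_m)}{u_m}(u_m-u)\le\frac{G(\xi)}{a}(u_m-u)$ on $[a,\xi]$, and close with a Volterra iteration yielding the factorial decay $B\kappa^mP(t)^m/m!$, which dominates any geometric rate. This is shorter and, notably, does not genuinely use Assumptions \eqref{ExtraAssumptionsl=0_ii}, \eqref{ExtraAssumptionsl=0_iv} or \eqref{ExtraAssumptionsl=0_vi}: your appeal to \eqref{ExtraAssumptionsl=0_vi} reduces to $G(\sigma u)\ge\sigma G(u)$, which already follows from \eqref{GAssumptionsGeneric_ii}, and $\int_0^{T_0}p_1>0$ is automatic since $p_1\in C(\mathbb{R}_+,\mathbb{R}_+)$ is positive. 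So your argument proves the stated rate under strictly weaker hypotheses; what the paper's heavier machinery buys is the comparison structure (the $\sigma^*$ ratio bounds and the $\varphi$-iteration) that is reused essentially verbatim in the uniqueness proof of Theorem \ref{Uniquenessl=0Theorem}, where the competing solution $v^*$ is not a priori comparable to $v$ by a Volterra telescoping from a common starting point.

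One small presentational caveat: for a general measure space $(X,\mu)$ the measurability of $t\mapsto\omega_m(t)=\operatorname{ess\,sup}_x w_m(x,t)$ is not entirely free, so it is cleaner to run your induction pointwise, i.e.\ prove $w_m(x,t)\le B\kappa^m P(t)^m/m!$ for a.e.\ $(x,t)$ directly, using Fubini to insert the inductive bound on $w_m(y,s)$ inside the double integral. This changes nothing in substance.
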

\begin{proof}
Denote by
\begin{equation}
v_m(x,t)\doteq u_m(x,t)-g(x,t), \quad\forall m\in\{0\}\cup\mathbb{N}, \quad \ae x\in X,\quad \ae t\in\mathbb{R}_+.\label{Eq4}
\end{equation}
Then the successive approximations take the following form,
\begin{equation}
\begin{array}{c}
\displaystyle v_{m+1}(x,t)=\int\limits_0^{t}\int\limits_{X}K(x,y;t-s)h(y,s) G(v_m(y,s)+g(y,s))d\mu(y)ds, \\
 \displaystyle  v_0(x,t)=\xi-\beta , \quad \forall m\in\{0\}\cup\mathbb{N}, \quad \ae x\in X, \quad \ae t\in\mathbb{R}_+.
\label{Eq5}
\end{array}
\end{equation}
By induction in $m$ it is easy to check that the sequence $\{v_m(x,t)\}_{m=0}^{\infty}$ consists of functions measurable on $X \times \mathbb{R}_+$ and
\begin{equation}
v_m(x,t)\geq 0, \quad\forall m\in\{0\}\cup\mathbb{N}, \quad \ae x\in X, \quad \ae t\in\mathbb{R}_+.\label{Eq6}
\end{equation}
Now let us show that
\begin{equation}
v_m(x,t)\,\,  \text{is monotonically decreasing in} \,\,m, \quad \ae x\in X, \quad \ae t\in\mathbb{R}_+.\label{Eq7}
\end{equation}
First let us establish that
$$
v_1(x,t)\leq v_0(x,t),\quad\ae x\in X,\quad\ae t\in\mathbb{R}_+.
$$
Indeed, in view of the monotonicity of $G$, together with the Assumption (\ref{KAssumptions_ii}) with $\lambda_\pm=0$, as well as the Assumptions (\ref{gAssumptions_iii}), (\ref{ExtraAssumptionsl=0_i}), (\ref{ExtraAssumptionsl=0_iii}) and (\ref{ExtraAssumptionsl=0_v}), we have
$$
v_1(x,t)\overset{(\ref{ExtraAssumptionsl=0_v}),(\ref{GAssumptionsGeneric_ii})}{\leq}\int\limits_0^t\int\limits_XK(x,y,t-s)h(y,s)G(\xi)d\mu(y)ds\overset{(\ref{ExtraAssumptionsl=0_iii})}{\leq}G(\xi)\int\limits_0^tp_2(s)ds
$$
$$
\overset{(\ref{ExtraAssumptionsl=0_i})}{\leq}G(\xi)=\xi-\beta=v_0(x,t),\quad\ae x\in X,\quad\ae t\in\mathbb{R}_+.
$$
Then, assuming that $v_m(x,t)\le v_{m-1}(x,t)$ for $\ae x\in X$, $\ae t\in\mathbb{R}_+$ and some $m\in\mathbb{N}$, one can show that $v_{m+1}(x,t)\le v_m(x,t)$ for $\ae x\in X$, $\ae t\in\mathbb{R}_+$, and apply the principle of mathematical induction to arrive at (\ref{Eq7}). Let $v\in L^\infty(X\times\mathbb{R}_+)$ be the almost everywhere limit (or infimum) of the monotonically decreasing non-negative sequence $\{v_m\}_{m=1}^\infty$. Since $G$ possesses the properties in {\normalfont\textbf{Assumptions}} \hyperref[GAssumptionsGeneric]{$G$}, taking into account {\normalfont\textbf{Assumptions}} \hyperref[KAssumptions]{$K$} and Assumption (\ref{gAssumptions_iii}), according to B. Levi's theorem \cite{kol} we conclude that the limit function $v$ satisfies the equation
\begin{equation}
v(x,t)=\int\limits_0^t \int\limits_X K(x,y;t-s)h(y,s)G(v(y,s)+g(y,s))d\mu(y)ds,\;\; \ae x\in X, \;\;  \ae t\in \mathbb{R}_+. \label{Eq34}
\end{equation}
Due to (\ref{Eq4}), from (\ref{Eq34}) we conclude that $u\doteq v+g$ is a desired solution of the equation (\ref{IntEq}). Due to (\ref{Eq6}) and (\ref{Eq7}), we also obtain that
\begin{equation}
0\leq v(x,t)\leq \xi-\beta, \quad \ae x\in X, \quad  \ae t\in \mathbb{R}_+. \label{Eq32}
\end{equation}

{\bf With the additional assumptions}, let us next prove that (with $\beta_0$ defined in (\ref{gAssumptions_ii}))
\begin{equation}
\text{for}\quad\ae  (x,t)\in X\times (0,T_0), \quad \text{we have} \quad v_1(x,t)\geq G(\xi-\beta+\beta_0)\int\limits_0^t p_1(s)ds, \label{Eq8}
\end{equation}
\begin{equation}
\text{and for}\quad\ae  (x,t)\in X\times [T_0, +\infty), \quad \text{we have} \quad v_1(x,t)\geq G(\xi-\beta)\int\limits_0^t p_1(s)ds. \label{Eq9}
\end{equation}
Indeed, taking into account {\normalfont\textbf{Assumptions}} \hyperref[gAssumptions]{$g$}, the monotonicity of the function $G$, as well as the inequality in the Assumption (\ref{ExtraAssumptionsl=0_iii}), from (\ref{Eq5}) we arrive at (\ref{Eq8}) and (\ref{Eq9}).

Taking into account  (\ref{Eq8}), (\ref{Eq9}), as well as {\normalfont\textbf{Assumptions}} \hyperref[KAssumptions]{$K$}, {\normalfont\textbf{Assumptions}} \hyperref[GAssumptionsGeneric]{$G$} and the Assumption (\ref{ExtraAssumptionsl=0_iii}), from (\ref{Eq5}) we get for $\ae(x,t)\in X\times (0,T_0)$ that
\begin{equation}
\begin{array}{c}
\displaystyle v_2(x,t)\overset{(\ref{gAssumptions_ii})}{\geq} \int\limits_0^t \int\limits_X K(x,y;t-s) h(y,s) G(v_1(y,s)+\beta_0)d\mu(y)ds \\
\displaystyle \overset{(\ref{ExtraAssumptionsl=0_iii})\wedge(\ref{Eq8})}{\geq} \int\limits_0^t p_1(s)\int\limits_X K(x,y;t-s)G\left(G(\xi-\beta+\beta_0)\int\limits_0^s p_1(\tau)d\tau +\beta_0\right)d\mu(y)ds \\
\displaystyle \overset{(\ref{KAssumptions_ii})}{=} \int\limits_0^t p_1(s)G\left(G(\xi-\beta+\beta_0)\int\limits_0^sp_1(\tau)d\tau +\beta_0\right)ds,
\label{Eq10}
\end{array}
\end{equation}
while for $\ae(x,t) \in X \times[T_0, +\infty)$ we obtain

\begin{equation}
\begin{array}{c}
\displaystyle v_2(x,t)\overset{(\ref{gAssumptions_i})}{\geq} \int\limits_0^t \int\limits_X K(x,y;t-s)h(y,s)G(v_1(y,s))d\mu(y)ds \\
\displaystyle \overset{(\ref{Eq9})}{\geq}  \int\limits_0^t \int\limits_X K(x,y;t-s)h(y,s)G\left(G(\xi-\beta)\int\limits_0^s p_1(\tau)d\tau\right)d\mu(y)ds \\
\displaystyle \overset{(\ref{KAssumptions_ii})}{\geq} \int\limits_0^t p_1(s)G\left(G(\xi-\beta)\int\limits_0^s p_1(\tau)d\tau\right)ds \geq \int\limits_0^{T_0}p_1(s)G(G(\xi-\beta)\int\limits_0^s p_1(\tau)d\tau)ds.
\label{Eq11}
\end{array}
\end{equation}
On the other hand, due to the Assumptions (\ref{ExtraAssumptionsl=0_i}), (\ref{ExtraAssumptionsl=0_iii}) and (\ref{ExtraAssumptionsl=0_v}), and {\normalfont\textbf{Assumptions}} \hyperref[GAssumptionsGeneric]{$G$}, the right hand side of the inequality (\ref{Eq11}) can be estimated as follows,
\begin{equation}
\begin{array}{c}
\displaystyle 0<\int\limits_0^{T_0}p_1(s)G\left(G(\xi-\beta)\int\limits_0^sp_1(\tau)d\tau\right)ds < \int\limits_0^{\infty}p_2(s)G\left(G(\xi-\beta)\int\limits_0^sp_1(\tau)d\tau\right)ds \\
\displaystyle \leq \int\limits_0^{\infty}p_2(s)ds\, G\left(G(\xi-\beta)\right)<G(G(\xi))=G(\xi-\beta)<G(\xi)=\xi-\beta,  \label{Eq12}
\end{array}
\end{equation}
where we used the Assumption (\ref{ExtraAssumptionsl=0_i}) implying
$$
\int\limits_0^sp_1(\tau)d\tau\le\int\limits_0^sp_2(\tau)d\tau<\int\limits_0^{+\infty}p_2(\tau)d\tau\le1.
$$
We introduce the function $\mathfrak{L}:\mathbb{R}_+\to\mathbb{R}$ as
\begin{equation}
 \mathfrak{L}(t)\doteq \frac{\int\limits_0^tp_1(s)G\left(G(\xi-\beta+\beta_0)\int\limits_0^sp_1(\tau)d\tau+\beta_0\right)ds}{(\xi-\beta)\int\limits_0^tp_2(s)ds}, \quad \forall t\in\mathbb{R}_+. \label{Eq13}
\end{equation}
Now we shall study some properties of the function $\mathfrak{L}$. Note that from the Assumptions (\ref{ExtraAssumptionsl=0_i}), (\ref{ExtraAssumptionsl=0_ii}), (\ref{ExtraAssumptionsl=0_iii}) and (\ref{ExtraAssumptionsl=0_v}), and {\normalfont\textbf{Assumptions}} \hyperref[GAssumptionsGeneric]{$G$}, it follows that
\begin{equation}
\mathfrak{L}\in C(\mathbb{R}_+,\mathbb{R}_+), \label{Eq14}
\end{equation}
\begin{equation}
\begin{array}{c}
\displaystyle  \mathfrak{L}(t)\overset{(\ref{ExtraAssumptionsl=0_iii})}{\leq} \frac{\int\limits_0^tp_2(s)G\left(G(\xi-\beta+\beta_0) \int\limits_0^sp_2(\tau)d\tau+\beta_0\right)ds}{(\xi-\beta)\int\limits_0^t p_2(s)ds} \\
\displaystyle  \overset{(\ref{ExtraAssumptionsl=0_i})}{\leq} \frac{\int\limits_0^tp_2(s)ds\, G\left(G(\xi-\beta+\beta_0)+\beta_0\right)}{(\xi-\beta)\int\limits_0^t p_2(s)ds}\overset{(\ref{GAssumptionsGeneric_ii})}{<}\frac{G(G(\xi)+\beta_0)}{\xi-\beta}\\
\displaystyle \le\frac{G(G(\xi)+\beta)}{\xi-\beta}=\frac{G(\xi)}{\xi-\beta}=1, \quad \forall t\in \mathbb{R}_+,
\label{Eq15}
\end{array}
\end{equation}

\begin{equation}
\begin{array}{c}
\displaystyle 0<\lim\limits_{t\to+\infty}\mathfrak{L}(t)=\frac{1}{\xi-\beta}\int\limits_0^{\infty}p_1(s)G\left(G(\xi-\beta+\beta_0)\int\limits_0^s p_1(\tau)d\tau+\beta\right)ds \\
\displaystyle<\frac{G(G(\xi-\beta+\beta_0)+\beta)}{\xi-\beta}<\frac{G(\xi)}{\xi-\beta}=1.
\label{Eq16}
\end{array}
\end{equation}
Applying L'H\^opital's rule,
\begin{equation}
\begin{array}{c}
\lim\limits_{t\rightarrow 0^+}\mathfrak{L}(t)=\lim\limits_{t\rightarrow 0^+}\frac{\int\limits_0^tp_1(s)G(G(\xi-\beta+\beta_0)\int\limits_0^s p_1(\tau)d\tau+\beta_0)ds}{(\xi-\beta)\int\limits_0^tp_2(s)ds} \\
\displaystyle = \lim\limits_{t\rightarrow 0^+}\frac{p_1(t)G(G(\xi-\beta+\beta_0)\int\limits_0^t p_1(\tau)d\tau+\beta_0)}{(\xi-\beta)p_2(t)}\\
\displaystyle  =\lim\limits_{t\rightarrow 0^+}\frac{p_1(t)}{p_2(t)}\cdot \lim\limits_{t\rightarrow 0^+}\frac{G\left(G(\xi-\beta+\beta_0)\int\limits_0^t p_1(\tau)d\tau+\beta_0\right)}{\xi-\beta} \\
\displaystyle
 =\frac{G(\beta_0)}{\xi-\beta}\cdot \lim\limits_{t\rightarrow 0^+}\frac{p_1(t)}{p_2(t)}\overset{(\ref{ExtraAssumptionsl=0_ii})}{\in} (0,1),
\label{Eq17}
\end{array}
\end{equation}
since $p_1(t)\leq p_2(t)$ and $G(\beta_0)<G(\xi)=\xi-\beta$ and $\lim\limits_{t\rightarrow 0^+}\frac{p_1(t)}{p_2(t)}>0.$

Taking into account the listed properties (\ref{Eq16}) and (\ref{Eq17}), the function $\mathfrak{L}$ can be extended continuously to the domain $[0, +\infty)$, and we can state that there exists a number $\sigma_1\in(0,1)$ such that
\begin{equation}
 \mathfrak{L}(t)\geq \sigma_1, \quad\forall t\in [0, +\infty). \label{Eq18}
\end{equation}
Thus, by virtue of (\ref{Eq10}), (\ref{Eq13}), (\ref{Eq18}), and the following easy-to-check inequality,
\begin{equation}
v_1(x,t)\leq (\xi-\beta)\int\limits_0^tp_2(s)ds, \quad \ae x\in X, \quad \ae t\in \mathbb{R}_+, \label{Eq19}
\end{equation}
we arrive at the two-sided estimate
\begin{equation}
\sigma_1\leq \frac{v_2(x,t)}{v_1(x,t)}\leq 1,\quad\ae x\in X,\quad\ae t\in (0,T_0). \label{Eq20}
\end{equation}
On the other hand, due to (\ref{Eq11}) and (\ref{Eq19}), the following inequality holds true,
\begin{multline}
\sigma_2\doteq \frac{1}{\xi-\beta}\int\limits_0^{T_0} p_1(s)G\left(G(\xi-\beta)\int\limits_0^s p_1(\tau)d\tau\right)ds \\
\leq \frac{v_2(x,t)}{v_1(x,t)}\leq 1,\quad\ae x\in X,\quad\ae t\in[T_0, +\infty), \label{Eq21}
\end{multline}
and $\sigma_2 \in (0,1)$ by virtue of inequality (\ref{Eq12}).

Denote by
\begin{equation}
\sigma^*\doteq \min\{\sigma_1, \sigma_2\}\in (0,1). \label{Eq22}
\end{equation}
Then, taking into consideration (\ref{Eq20}), (\ref{Eq21}) and (\ref{Eq22}), we come to the inequalities
\begin{equation}
\sigma^*v_1(y,s)\leq v_2(y,s)\leq v_1(y,s), \quad \ae y\in X, \quad \ae s \in \mathbb{R}_+, \label{Eq23}
\end{equation}
whence it follows that
\begin{multline}
\sigma^* (v_1(y,s)+g(y,s))\leq v_2(y,s)+g(y,s) \\
\leq v_1(y,s)+g(y,s), \quad \ae y\in X, \quad  \ae s\in \mathbb{R}_+. \label{Eq24}
\end{multline}
Applying $G$ to all sides of the inequality (\ref{Eq24}), thanks to its monotonicity, we have
\begin{multline*}
G\left(\sigma^*(v_1(y,s)+g(y,s))\right)\leq G\left(v_2(y,s)+g(y,s)\right)\\
\leq G(v_1(y,s)+g(y,s)),\quad\ae y\in X,\quad\ae s\in\mathbb{R}.
\end{multline*}
Due to the Assumptions (\ref{ExtraAssumptionsl=0_iii}) and (\ref{ExtraAssumptionsl=0_vi}), and the fact that
$$
0\leq v_1(y,s)+g(y,s)\overset{(\ref{Eq19})}{\leq}\xi-\beta+g(y,s)\leq\xi,\quad\ae y\in X,\quad\ae s\in\mathbb{R}_+,
$$
we can write
$$
\varphi(\sigma^*)\int\limits_0^t\int\limits_XK(x,y,t-s)h(y,s)G(v_1(y,s)+g(y,s))d\mu(y)ds
$$
$$
\leq\int\limits_0^t\int\limits_XK(x,y,t-s)h(y,s)G(v_2(y,s)+g(y,s))d\mu(y)ds
$$
$$
\leq\int\limits_0^t\int\limits_XK(x,y,t-s)h(y,s)G(v_1(y,s)+g(y,s))d\mu(y)ds,\quad\ae x\in X,\quad\ae t\in\mathbb{R}_+,
$$
which, according to (\ref{Eq5}), means
$$\varphi(\sigma^*)v_2(x,t)\leq v_3(x,t)\leq v_2(x,t), \quad \ae x\in X, \quad  \ae t\in \mathbb{R}_+,$$
whence, in particular, it follows that
\begin{multline}
\varphi(\sigma^*)(v_2(y,s)+g(y,s))\leq v_3(y,s)+g(y,s)\\
\leq v_2(y,s)+g(y,s), \quad\ae y\in X, \quad  \ae s \in \mathbb{R}_+ . \label{Eq25}
\end{multline}
Due to (\ref{Eq25}), {\normalfont\textbf{Assumptions}} \hyperref[KAssumptions]{$K$}, {\normalfont\textbf{Assumptions}} \hyperref[gAssumptions]{$g$}, {\normalfont\textbf{Assumptions}} \hyperref[l*AssumptionsGeneric]{$h$}, {\normalfont\textbf{Assumptions}} \hyperref[GAssumptionsGeneric]{$G$}, the Assumption (\ref{ExtraAssumptionsl=0_iv}) and the non-negativity of the function $h$, from (\ref{Eq5}) we arrive at the inequality
$$\varphi(\varphi(\sigma^*))v_3(x,t)\leq v_4(x,t)\leq v_3(x,t), \quad \ae x\in X, \quad  \ae t\in \mathbb{R}_+.$$
Continuing this process, at the $m$-th step we obtain the following two sided inequality,
\begin{equation}
\underbrace{\varphi(\varphi\dots\varphi(\sigma^*))}\limits_{m}v_{m+1}(x,t)\leq v_{m+2}(x,t)\leq v_{m+1}(x,t), \quad \ae x\in X, \quad \ae t\in \mathbb{R}_+. \label{Eq26}
\end{equation}
From (\ref{Eq26}) and (\ref{Eq7}), due to the definition of the zeroth approximation in iterations (\ref{Eq5}), we arrive at inequalities
\begin{multline}
0\leq v_{m+1}(x,t)-v_{m+2}(x,t) \\
\leq (\xi-\beta)\left[1-\underbrace{\varphi(\varphi\dots\varphi(\sigma^*))}\limits_{m}\right], \quad \forall m\in \mathbb{N}, \quad \ae x\in X, \quad \ae t\in \mathbb{R}_+. \label{Eq27}
\end{multline}
Let $\varepsilon\in(0,1)$ be an arbitrary number. We will now make use of the inequality (3.16) from \cite{khach}, namely,
\begin{equation}
1>\underbrace{\varphi(\varphi\dots\varphi(\sigma^*))}\limits_{m}\geq k^m \sigma^*+1-k^m, \quad \forall m\in \mathbb{N}, \label{Eq28}
\end{equation}
where
\begin{equation}
k=k(\varepsilon) \doteq  \frac{1-\varphi(\varepsilon\sigma^*)}{1-\varepsilon\sigma^*}. \label{Eq29}
\end{equation}
Thus, from (\ref{Eq27}) combined with (\ref{Eq28}) and (\ref{Eq29}), it follows that
\begin{multline}
0\leq v_{m+1}(x,t)-v_{m+2}(x,t)\\
\leq (\xi -\beta)(1-\sigma^*)k^m, \quad \forall m\in \mathbb{N}, \quad\ae  x\in X, \quad \ae t\in \mathbb{R}_+. \label{Eq30}
\end{multline}
Now we rewrite the inequality (\ref{Eq30}) for members $m+1, m+2, \dots, m+p:$
$$0\leq v_{m+2}(x,t)-v_{m+3}(x,t)\leq (\xi-\beta)(1-\sigma^*)k^{m+1}, \quad \ae x\in X, \quad  \ae t\in \mathbb{R}_+,$$
$$\vdots$$
$$0\leq v_{m+p+1}(x,t)-v_{m+p+2}(x,t)\leq (\xi-\beta)(1-\sigma^*)k^{m+p}, \quad \ae x\in X, \quad  \ae t\in \mathbb{R}_+.$$
Summing up the corresponding sides of all these inequalities, and using the inequality (\ref{Eq30}), we get
\begin{equation}
\begin{array}{c}
\displaystyle 0\leq v_{m+1}(x,t)-v_{m+p+2}(x,t)\leq (\xi -\beta)(1-\sigma^*)(1+k+\dots+k^p)k^m\leq \\
\displaystyle \leq \frac{(\xi-\beta)(1-\sigma^*)}{1-k}k^m, \quad \forall m\in\mathbb{N}, \quad \ae x\in X, \quad \ae t\in \mathbb{R}_+. \label{Eq31}
\end{array}
\end{equation}
From (\ref{Eq30}) it immediately follows that the sequence of measurable functions $\{v_m\}_{m=0}^{\infty}$ converges uniformly to the measurable on $X\times \mathbb{R}_+$ function $v$ as $m\rightarrow +\infty.$ In (\ref{Eq31}), tending the number $p$ to infinity, we arrive at the inequality
\begin{multline}
0\leq v_{m+1}(x,t)-v(x,t)\\
\leq \frac{(\xi-\beta) (1-\sigma^*)}{1-k}k^m, \quad \ae x\in X, \quad \ae t\in \mathbb{R}_+, \quad \forall m\in \mathbb{N}. \label{Eq33}
\end{multline}
This concludes the proof of the theorem. $\Box$
\end{proof}

\subsection{The substochastic case: $\lambda_\pm>0$}

Suppose now that $\lambda_\pm>0$ in {\normalfont\textbf{Assumptions}} \hyperref[KAssumptions]{$K$}, which we dubbed the substochastic case. This time we will operate with one or more of the following additional assumptions on the functions $h$ and $G$.

\begin{assumptionssubSt}[Additional]\label{ExtraAssumptionsl>0} We assume that
\begin{itemize}

\item[(i)]
\begin{equation}
0<\alpha\doteq\inf_{(x,t)\in X\times\mathbb{R}_+}h(x,t);\label{ExtraAssumptionsl>0_i}
\end{equation}

\item[(ii)] $\exists\,\eta>0$ such that
\begin{equation}
\gamma\,G(\eta)=(\eta-\beta)\lambda_-,\quad\beta\doteq\|g\|_{L^\infty(X\times\mathbb{R}_+)}.\label{ExtraAssumptionsl>0_ii}
\end{equation}

\item[(iii)]
\begin{equation}
\gamma\doteq\|h\|_{L^\infty(X\times\mathbb{R}_+)}<\infty.\label{ExtraAssumptionsl>0_iii}
\end{equation}

\end{itemize}
\end{assumptionssubSt}
\begin{remark} Note that if the function $G$ is so-called strongly concave, i.e.,
$$
\lim_{u\to+\infty}\frac{G(u)}{u}=0,
$$
then the Assumption (\ref{ExtraAssumptionsl>0_ii}) is automatically satisfied.
\end{remark}

The corresponding existence results is as follows.

\begin{theorem}\label{Existencel>0Theorem} Suppose that the following assumptions hold:
\begin{itemize}

\item {\normalfont\textbf{Assumptions}} \hyperref[KAssumptions]{$K$} with $\lambda_\pm>0$;

\item {\normalfont\textbf{Assumptions}} $g$: \hyperref[gAssumptions_i]{(i)} and \hyperref[gAssumptions_iii]{(iii)};

\item {\normalfont\textbf{Assumptions}} \hyperref[l*AssumptionsGeneric]{$h$};

\item {\normalfont\textbf{Assumptions}} \hyperref[GAssumptionsGeneric]{$G$};

\item {\normalfont\textbf{Assumptions}} $\lambda_\pm>0$: \hyperref[ExtraAssumptionsl>0_ii]{(ii)} and \hyperref[ExtraAssumptionsl>0_iii]{(iii)}.

\end{itemize}
Then
$$
\exists\,u\in L^\infty(X\times\mathbb{R}_+,[0,+\infty)),
$$
such that $u$ satisfies the equation (\ref{IntEq}). Moreover, $u$ is the almost everywhere limit of the decreasing sequence $\{u_m\}_{m=1}^\infty$ of non-negative measurable functions, given by the following iterative approximations,
$$
u_{m+1}(x,t)\doteq g(x,t)+\int\limits_0^t\int\limits_XK(x,y;t-s)h(y,s)G(u_m(y,s))d\mu(y)ds,
$$
$$
u_0(x,t)\doteq\eta-\beta+g(x,t),\quad\forall m\in\{0\}\cup\mathbb{N},\quad\ae x\in X,\quad\ae t\in\mathbb{R}_+,
$$
with $\beta$ as in (\ref{ExtraAssumptionsl>0_ii}).

If, in addition, the following assumptions are true:
\begin{itemize}

\item {\normalfont\textbf{Assumptions}} $g$: \hyperref[gAssumptions_ii]{(ii)};

\item {\normalfont\textbf{Assumptions}} $\lambda_\pm>0$: \hyperref[ExtraAssumptionsl>0_i]{(i)},

\end{itemize}
then there exist $C>0$ and $k\in(0,1)$ such that
$$
0\le u_{m+1}(x,t)-u(x,t)\le C\,k^m,\quad\ae x\in X,\quad\ae t\in\mathbb{R}_+,\quad\forall m\in\mathbb{N}.
$$
\end{theorem}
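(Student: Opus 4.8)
\textbf{Proof plan for Theorem \ref{Existencel>0Theorem}.}

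The plan is to mirror the structure of the proof of Theorem \ref{Existencel=0Theorem}, exploiting the spectral gap $\lambda_->0$ to replace the role played there by the normalization $\int_0^\infty p_2(s)\,ds=1$. First I would set $v_m\doteq u_m-g$, so that the iteration becomes
\begin{equation*}
v_{m+1}(x,t)=\int_0^t\int_X K(x,y;t-s)h(y,s)G\bigl(v_m(y,s)+g(y,s)\bigr)\,d\mu(y)\,ds,\qquad v_0=\eta-\beta.
\end{equation*}
The first block of the argument is monotonicity: I would show by induction that the $v_m$ are measurable, non-negative, and decreasing in $m$. The base case $v_1\le v_0$ is where the substochastic hypothesis enters. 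Using $v_0+g\le\eta$ (since $\|g\|_\infty=\beta$), monotonicity of $G$, the bound $h\le\gamma$ from \eqref{ExtraAssumptionsl>0_iii}, and the upper estimate in \eqref{KAssumptions_ii} with $\lambda_-$, one gets
\begin{equation*}
v_1(x,t)\le \gamma\,G(\eta)\int_0^t e^{-(t-s)\lambda_-}\cdot\Bigl(\text{mass bound}\Bigr)\,ds\le \gamma G(\eta)\,\frac{1-e^{-t\lambda_-}}{\lambda_-}\le \frac{\gamma G(\eta)}{\lambda_-}=\eta-\beta=v_0,
\end{equation*}
where the last equality is exactly the defining relation \eqref{ExtraAssumptionsl>0_ii}. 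The inductive step $v_{m+1}\le v_m$ then follows from monotonicity of $G$ applied to $v_m+g\le v_{m-1}+g$. By B.~Levi's theorem (monotone convergence), the a.e. limit $v$ of the decreasing non-negative sequence satisfies the limiting integral equation, and $u\doteq v+g$ solves \eqref{IntEq}; moreover $0\le v\le\eta-\beta$.

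For the quantitative decay rate under the extra assumptions \eqref{gAssumptions_ii} and \eqref{ExtraAssumptionsl>0_i}, the strategy is again to produce a uniform lower bound $v_2\ge\sigma^* v_1$ for some $\sigma^*\in(0,1)$, then propagate it through the concave homeomorphism $\varphi$ exactly as in \eqref{Eq26}--\eqref{Eq33}. The lower bound on $v_1$ comes from the lower estimate in \eqref{KAssumptions_ii}, the lower bound $h\ge\alpha>0$ from \eqref{ExtraAssumptionsl>0_i}, and $g\ge\beta_0$ on $(0,T_0)$: one obtains $v_1(x,t)\ge \alpha\,G(\beta_0)\,\frac{1-e^{-t\lambda_+}}{\lambda_+}$ on $X\times(0,T_0)$ and a similar positive bound for $t\ge T_0$ using $v_1\ge\alpha\,G(\eta-\beta)\int_0^{\cdot}(\dots)$ carried forward. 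The analogue of the function $\mathfrak L$ in \eqref{Eq13} would be the ratio $v_2/v_1$ estimated from below; the integrability that made $\mathfrak L$ bounded away from $0$ in the stochastic case is here supplied by the factors $e^{-(t-s)\lambda_-}$ and the convergence of $\int_0^\infty e^{-s\lambda_-}\,ds=1/\lambda_-<\infty$, together with the strict inequality $\gamma G(\eta)/\lambda_-=\eta-\beta<\eta$ that leaves room for the concavity constant. Taking limits $t\to0^+$ (L'Hôpital, as in \eqref{Eq17}, now trivialized since $h$ need not be compared to a $p_2$) and $t\to+\infty$ gives that the ratio is bounded below by a positive constant $\sigma_1$ on $(0,T_0)$ and by $\sigma_2$ on $[T_0,\infty)$; set $\sigma^*=\min\{\sigma_1,\sigma_2\}$.

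Once $\sigma^* v_1\le v_2\le v_1$ is established, the remainder is verbatim the same as in Theorem \ref{Existencel=0Theorem}: apply $G$ to $\sigma^*(v_1+g)\le v_2+g\le v_1+g$ using the concavity estimate \eqref{ExtraAssumptionsl=0_vi}-type bound --- here I should note that the statement as given invokes only Assumptions $\lambda_\pm>0$ (i), so the propagation must use the built-in concavity of $\varphi$ on $[0,1]$ via the elementary inequality \eqref{Eq28} from \cite{khach} with $k=k(\varepsilon)=(1-\varphi(\varepsilon\sigma^*))/(1-\varepsilon\sigma^*)$ --- to get $\varphi(\sigma^*)v_2\le v_3\le v_2$, iterate to obtain $0\le v_{m+1}-v_{m+2}\le(\eta-\beta)(1-\sigma^*)k^m$, sum the telescoping tail as in \eqref{Eq31}, and let $p\to\infty$ to conclude $0\le u_{m+1}-u\le C k^m$ with $C=(\eta-\beta)(1-\sigma^*)/(1-k)$. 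The main obstacle is the base-case estimate $v_1\le v_0$: it is the only place where the defining equation \eqref{ExtraAssumptionsl>0_ii} for $\eta$ is used, and one must be careful that the time integral of the mass times the exponential weight is correctly bounded by $1/\lambda_-$ uniformly in $x$ and $t$ --- this is precisely what the substochastic normalization buys, replacing the probability-density normalization of the stochastic case.
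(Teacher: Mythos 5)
Your proposal is correct and follows essentially the same route as the paper's proof: the substitution $v_m=u_m-g$, the base case $v_1\le v_0$ via $\gamma G(\eta)\int_0^t e^{-(t-s)\lambda_-}\,ds\le\gamma G(\eta)/\lambda_-=\eta-\beta$, monotone convergence for existence, and then two-sided bounds on $v_2/v_1$ on $(0,T_0)$ and $[T_0,\infty)$ (with the factor $(1-e^{-t\lambda_+})/(1-e^{-t\lambda_-})\ge\lambda_-/\lambda_+$ replacing the $\mathfrak{L}$-analysis) followed by the $\varphi$-iteration of Theorem \ref{Existencel=0Theorem}. You also correctly flag the same wrinkle present in the paper itself, namely that the final geometric-rate propagation tacitly reuses the concavity assumptions on $\varphi$ and $G$ from the stochastic case.
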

\begin{proof}
Here we will put
$$
v_m(x,t)\doteq u_m(x,t)-g(x,t), \quad \forall m\in\{0\}\cup\mathbb{N} , \quad \ae x\in X, \quad \ae t\in \mathbb{R}_+.
$$
By induction in $m$, it is easy to check that
\begin{equation}
v_m\, \text{is measurable on the set}\,\, X\times \mathbb{R}_+, \quad\forall m\in\{0\}\cup\mathbb{N}, \label{Eq35}
\end{equation}
\begin{equation}
v_m(x,t)\geq 0,  \quad \forall m\in\{0\}\cup\mathbb{N},  \quad \ae x\in X, \quad \ae t\in \mathbb{R}_+, \label{Eq36}
\end{equation}
\begin{equation}
v_{m+1}(x,t)\leq v_{m}(x,t),   \quad \forall m\in\{0\}\cup\mathbb{N},  \quad \ae x\in X, \quad \ae t\in \mathbb{R}_+.\label{Eq37}
\end{equation}
As in the proof of Theorem \ref{Existencel=0Theorem}, we establish the existence of the solution $u$ which is the almost everywhere limit (infimum) of the approximating sequence $\{u_m\}_{m=1}^\infty$.

Now using {\normalfont\textbf{Assumptions}} \hyperref[KAssumptions]{$K$}, {\normalfont\textbf{Assumptions}} \hyperref[gAssumptions]{$g$}, {\normalfont\textbf{Assumptions}} \hyperref[GAssumptionsGeneric]{$G$} and  {\normalfont\textbf{Assumptions}} \hyperref[ExtraAssumptionsl>0]{$\lambda_\pm>0$}, we will estimate iteration functions $v_1$ and $v_2$ from above and below. First,
 $$v_1(x,t)=\int\limits_0^t\int\limits_X K(x,y;t-s)h(y,s)G(v_0(y,s)+g(y,s))d\mu(y)ds $$
 $$\leq\gamma \int\limits_0^t \int\limits_X K(x,y; t-s)G(\eta-\beta+g(y,s))d\mu(y)ds$$
 $$\leq \gamma G(\eta)\int\limits_0^t e^{-(t-s)\lambda_-}ds =(\eta-\beta)(1-e^{-t\lambda_-}), \quad \ae x\in X, \quad \ae t\in \mathbb{R}_+.$$
Thus,
\begin{equation}
v_1(x,t)\leq (\eta-\beta)(1-e^{-t\lambda_-}), \quad \ae x\in X, \quad \ae t\in \mathbb{R}_+.\label{Eq38}
\end{equation}
On the other hand, for $\ae x\in X$ and $\ae t\in (0,T_0)$ we have
\begin{equation}
\begin{array}{c}
\displaystyle v_1(x,t)\overset{(\ref{ExtraAssumptionsl>0_i})}{\geq} \alpha \int\limits_0^t \int\limits_X K(x,y;t-s)G(\eta-\beta+g(y,s))d\mu(y)ds\\
\displaystyle \overset{(\ref{gAssumptions_ii})}{\geq} \alpha G(\eta-\beta +\beta_0)\int\limits_0^t e^{-(t-s)\lambda_+}ds =\frac{\alpha}{\lambda_+}G(\eta-\beta+\beta_0)(1-e^{-t\lambda_+}),
\label{Eq39}
\end{array}
\end{equation}
while for $\ae x\in X$ and $\ae t\in [T_0, +\infty)$ we have
\begin{equation}
\begin{array}{c}
\displaystyle v_1(x,t)\overset{(\ref{ExtraAssumptionsl>0_i})}{\geq} \alpha \int\limits_0^t \int\limits_X K(x,y;t-s)G(\eta-\beta+g(y,s))d\mu(y)ds \\
\displaystyle \overset{(\ref{KAssumptions_ii})}{\geq} \frac{\alpha}{\lambda_+}G(\eta-\beta)(1-e^{-t\lambda_+})\geq \frac{\alpha}{\lambda_+}G(\eta-\beta)(1-e^{-T_0\lambda_+}).
\label{Eq40}
\end{array}
\end{equation}
Therefore, for $\ae x\in X$, $\ae t\in (0,T_0)$ we have
\begin{equation}
\begin{array}{c}
\displaystyle v_2(x,t)\overset{(\ref{ExtraAssumptionsl>0_i})}{\geq} \alpha \int\limits_0^t \int\limits_X K(x,y;t-s)G(v_1(y,s)+g(y,s))d\mu(y)ds \\
\displaystyle \geq \alpha \int\limits_0^t \int\limits_XK(x,y;t-s)G(v_1(y,s)+\beta_0)d\mu(y)ds \\
\displaystyle \overset{(\ref{Eq40})}{\geq} \alpha \int\limits_0^t G\left(\frac{\alpha}{\lambda_+}G(\eta-\beta+\beta_0)(1-e^{-s\lambda_+})+\beta_0\right)e^{-(t-s)\lambda_+}ds\geq \frac{\alpha}{\lambda_+}G(\beta_0)(1-e^{-t\lambda_+}),
\label{Eq41}
\end{array}
\end{equation}
while for $\ae x\in X$, $\ae t\in [T_0, +\infty)$ we get
\begin{equation}
\begin{array}{c}
\displaystyle v_2(x,t)\geq \alpha \int\limits_0^t \int\limits_X K(x,y;t-s)G(v_1(y,s))d\mu(y)ds \\
\displaystyle \overset{(\ref{Eq40})}{\geq}  \alpha  G\left(\frac{\alpha}{\lambda_+}G(\eta-\beta)(1-e^{-T_0\lambda_+})\right)\int\limits_0^t\int\limits_X K(x,y;t-s)d\mu(y)ds\\
\displaystyle\geq\frac{\alpha}{\lambda_+}G\left(\frac{\alpha}{\lambda_+}G(\eta-\beta)(1-e^{-T_0\lambda_+})\right)(1-e^{-t\lambda_+}).
\label{Eq42}
\end{array}
\end{equation}
In particular, from the last inequality it follows that
\begin{multline}
v_2(x,t)\geq  \frac{\alpha}{\lambda_+}G\left(\frac{\alpha}{\lambda_+}G(\eta-\beta)(1-e^{-T_0\lambda_+})\right)(1-e^{-T_0\lambda_+}), \\
\quad \ae x\in X, \quad \ae t\in [T_0,+\infty).
\label{Eq43}
\end{multline}
Thus, considering (\ref{Eq37}), (\ref{Eq38}), (\ref{Eq39}) and (\ref{Eq41}), for $\ae x\in X$, $\ae t\in (0, T_0)$ we arrive at the following two-sided estimate:
\begin{equation}
\frac{\alpha G(\beta_0)}{(\eta-\beta)\lambda_-}\frac{\lambda_-}{\lambda_+}<\frac{\alpha G(\beta_0)}{(\eta-\beta)\lambda_+}\frac{1-e^{-t\lambda_+}}{1-e^{-t\lambda_-}}\leq \frac{v_2(x,t)}{v_1(x,t)}\leq 1.
\label{Eq44}
\end{equation}
Now, due to (\ref{Eq37}), (\ref{Eq38}) and  (\ref{Eq42}),  for $\ae x\in X$, $\ae t\in [T_0,+\infty)$ we confirm the validity of the following double estimate,
\begin{eqnarray}
\frac{\alpha}{(\eta-\beta)\lambda_-}G\left(\frac{\alpha}{\lambda_+}G(\eta-\beta)(1-e^{-T_0\lambda_+})\right)\frac{\lambda_-}{\lambda_+}\nonumber\\
<\frac{\alpha}{(\eta-\beta)\lambda_+}G\left(\frac{\alpha}{\lambda_+}G(\eta-\beta)(1-e^{-T_0\lambda_+})\right)\frac{1-e^{-t\lambda_+}}{1-e^{-t\lambda_-}}\leq \frac{v_2(x,t)}{v_1(x,t)}\leq 1.
\label{Eq45}
\end{eqnarray}
Now let us show that
\begin{equation}
\sigma^{\sharp}\doteq\min\left\{\frac{\alpha G(\beta_0)}{(\eta-\beta)\lambda_-}, \quad  \frac{\alpha}{(\eta-\beta)\lambda_-}G\left(\frac{\alpha}{\lambda_+}G(\eta-\beta)(1-e^{-T_0\lambda_+})\right)\right\}\in (0,1).
\label{Eq46}
\end{equation}
Indeed, by virtue of {\normalfont\textbf{Assumptions}} \hyperref[GAssumptionsGeneric]{$G$} and {\normalfont\textbf{Assumptions}} \hyperref[ExtraAssumptionsl>0]{$\lambda_\pm>0$}, we have
$$0<\frac{\alpha G(\beta_0)}{(\eta-\beta)\lambda_-}\leq \frac{\alpha G(\beta)}{(\eta-\beta)\lambda_-}< \frac{\alpha G(\eta)}{(\eta-\beta)\lambda_-}=\frac{\alpha}{\gamma}\leq 1,$$
$$0<\frac{\alpha}{(\eta-\beta)\lambda_-}G\left(\frac{\alpha}{\lambda_+}G(\eta-\beta)(1-e^{-T_0\lambda_+})\right)< \frac{\alpha}{(\eta-\beta)\lambda_-}G\left(\frac{\alpha}{\lambda_-}G(\eta-\beta)\right)$$
$$<\frac{\alpha}{(\eta-\beta)\lambda_-}G\left(\frac{\alpha}{\lambda_-}G(\eta)\right)<\frac{\alpha}{(\eta-\beta)\lambda_-}G\left(\frac{\alpha}{\gamma}(\eta-\beta)\right)<\frac{\alpha}{(\eta-\beta)\lambda_-}G(\eta)=\frac{\alpha}{\gamma}\leq 1.$$
Denote
$$
\sigma^{\#}\doteq\frac{\lambda_-}{\lambda_+}\sigma^{\sharp}\in(0,1).
$$
Thus, taking into account the equations (\ref{Eq44})-(\ref{Eq46}), we conclude that
$$
\sigma^{\#}v_1(x,t)\leq v_2(x,t)\leq v_1(x,t), \quad \ae x\in X, \quad \ae t\in \mathbb{R}_+.
$$
Further, following a reasoning similar to the one in the proof of Theorem \ref{Existencel=0Theorem}, we complete the proof of the formulated theorem. $\Box$
\end{proof}

\subsection{Relaxing assumptions on $g$}

In view of the interpretation (\ref{AuxCauchyProblem}) we have

$$
u_0(x)=\lim\limits_{t\to0+}g(x,t).
$$
Thus, the assumption {\normalfont\textbf{Assumptions}} $g$: \hyperref[gAssumptions_ii]{(ii)} implies the restriction $\inf u_0>0$ on the Cauchy data, which may be problematic for applications in PDEs. In particular, if the measure $\mu$ is infinite, then $u_0\notin L^p(X,\mu)$ for any $p\in[1,+\infty)$. A vertical shifting argument may help translating this Cauchy problem to a slightly different one without an infimum condition on the Cauchy data.

Consider the following alternative Cauchy problem,
\begin{equation}
\begin{cases}
\partial_t v(x,t)+\mathcal{L}v(x,t)=h(x,t)\tilde G(u(x,t))+\tilde f(x,t),\\
v(x,0)=v_0(x),
\end{cases}\label{CauchyProblemAlt}
\end{equation}
and the corresponding integral equation
\begin{equation}
v(x,t)=\tilde g(x,t)+\int\limits_0^t\int\limits_XK(x,y;t-s)h(y,s)\tilde G(v(y,s))d\mu(y)ds.\label{IntEqAlt}
\end{equation}
These are related through the auxiliary Cauchy problem
$$
\begin{cases}
\partial_t\tilde g(x,t)+\mathcal{L}\tilde g(x,t)=\tilde f(x,t),\\
\tilde g(x,0)=v_0(x).
\end{cases}
$$

Consider the following assumptions for the new functions $\tilde g$ and $\tilde G$.

\begin{assumptionsgAlt}[Term $\tilde g$ in (\ref{IntEqAlt})]\label{gAssumptionsAlt} We assume that
\begin{itemize}

\item[(i\'\,\!)]
\begin{equation}
\tilde g:X\times\mathbb{R}_+\to[0,+\infty)\quad\mbox{is measurable};\label{gAssumptionsAlt_i}
\end{equation}

\item[(iii\'\,\!)]
\begin{equation}
\tilde g\in L^\infty(X\times\mathbb{R}_+).\label{gAssumptionsAlt_iii}
\end{equation}

\end{itemize}
\end{assumptionsgAlt}

\begin{assumptionsGAlt}[Non-linearity $\tilde G$]\label{GAssumptionsGenericAlt} We assume that
\begin{itemize}

\item[(i\'\,\!)] For some $\beta_0>0$,
\begin{equation}
\tilde G\in C([-\beta_0,+\infty),[0,+\infty));\label{GAssumptionsGenericAlt_i}
\end{equation}

\item[(ii\'\,\!)]
\begin{equation}
\tilde G(-\beta_0)=0,\quad\tilde G\quad\mbox{is strictly increasing and concave}.\label{GAssumptionsGenericAlt_ii}
\end{equation}

\end{itemize}
\end{assumptionsGAlt}

Let us also introduce the modified additional assumptions for the stochastic and substochastic cases, correspondingly.

\begin{assumptionsStAlt}[Additional]\label{ExtraAssumptionsl=0Alt} We assume that $\exists\,p_1,p_2\in C(\mathbb{R}_+,\mathbb{R}_+)$ such that
\begin{itemize}

\item[(i\'\,\!)]
\begin{equation}
p_1\not\equiv p_2,\quad\int\limits_0^\infty p_2(t)dt=1;\label{ExtraAssumptionsl=0Alt_i}
\end{equation}

\item[(ii\'\,\!)]
\begin{equation}
\lim_{t\to0+}\,\frac{p_1(t)}{p_2(t)}>0;\label{ExtraAssumptionsl=0Alt_ii}
\end{equation}

\item[(iii\'\,\!)]
\begin{equation}
p_1(t)\le h(x,t)\le p_2(t),\quad\ae x\in X,\quad\ae t\in\mathbb{R}_+.\label{ExtraAssumptionsl=0Alt_iii}
\end{equation}

\end{itemize}
Suppose $\exists\,\varphi:[0,1]\to[0,1]$ homeomorphism such that
\begin{itemize}

\item[(iv\'\,\!)]
\begin{equation}
\varphi(0)=0,\quad\varphi(1)=1,\quad\varphi\quad\mbox{concave};\label{ExtraAssumptionsl=0Alt_iv}
\end{equation}

\item[(v\'\,\!)] $\exists\,\tilde\xi\in(-\beta_0,+\infty)$ such that
\begin{equation}
\tilde\xi-\tilde G(\tilde\xi)=\|\tilde g\|_{L^\infty(X\times\mathbb{R}_+)};\label{ExtraAssumptionsl=0Alt_v}
\end{equation}

\item[(vi\'\,\!)]
\begin{equation}
\tilde G(\sigma v-(1-\sigma)\beta_0)\ge\varphi(\sigma)\tilde G(v),\quad\forall\sigma\in[0,1],\quad\forall v\in[-\beta_0,\tilde\xi].\label{ExtraAssumptionsl=0Alt_vi}
\end{equation}

\quad

\end{itemize}
\end{assumptionsStAlt}

\begin{assumptionssubStAlt}[Additional]\label{ExtraAssumptionsl>0Alt} We assume that
\begin{itemize}

\item[(i\'\,\!)]
\begin{equation}
0<\alpha\doteq\inf_{(x,t)\in X\times\mathbb{R}_+}h(x,t);\label{ExtraAssumptionsl>0Alt_i}
\end{equation}

\item[(ii\'\,\!)] $\exists\,\tilde\eta\in(-\beta_0,+\infty)$ such that
\begin{equation}
\gamma\,G(\tilde\eta)=(\tilde\eta-\tilde\beta)\lambda_-,\quad\tilde\beta\doteq\|\tilde g\|_{L^\infty(X\times\mathbb{R}_+)}.\label{ExtraAssumptionsl>0Alt_ii}
\end{equation}

\item[(iii\'\,\!)]
\begin{equation}
\gamma\doteq\|h\|_{L^\infty(X\times\mathbb{R}_+)}<\infty.\label{ExtraAssumptionsl>0Alt_iii}
\end{equation}

\end{itemize}
\end{assumptionssubStAlt}

\begin{remark} Note that if the function $\tilde G$ is so-called strongly concave, i.e.,
$$
\lim_{v\to+\infty}\frac{\tilde G(v)}{v}=0,
$$
then the Assumptions (\ref{ExtraAssumptionsl=0Alt_v}) and (\ref{ExtraAssumptionsl>0Alt_ii}) are automatically satisfied.
\end{remark}

Now we can state the existence results analogous to Theorem \ref{Existencel=0Theorem} and Theorem \ref{Existencel>0Theorem}.

\begin{corollary}\label{Existencel=0Corr}  Suppose that the following assumptions hold:
\begin{itemize}

\item {\normalfont\textbf{Assumptions}} \hyperref[KAssumptions]{$K$} hold with $\lambda_\pm=0$;

\item {\normalfont\textbf{Assumptions}} \hyperref[gAssumptionsAlt]{$\tilde g$};

\item {\normalfont\textbf{Assumptions}} \hyperref[l*AssumptionsGeneric]{$h$};

\item {\normalfont\textbf{Assumptions}} \hyperref[GAssumptionsGenericAlt]{$\tilde G$};

\item {\normalfont\textbf{Assumptions}} \hyperref[ExtraAssumptionsl=0Alt]{$\widetilde{\lambda_\pm=0}$}.

\end{itemize}
Then
$$
\exists\,v\in L^\infty(X\times\mathbb{R}_+,[0,+\infty)),
$$
such that $v$ satisfies the equation (\ref{IntEqAlt}). Moreover, $v$ is the uniform limit of the decreasing sequence $\{v_m\}_{m=1}^\infty$ of non-negative measurable functions, given by following iterative approximations,
$$
v_{m+1}(x,t)\doteq\tilde g(x,t)+\int\limits_0^t\int\limits_XK(x,y;t-s)h(y,s)\tilde G(v_m(y,s))d\mu(y)ds,
$$
$$
v_0(x,t)\doteq\tilde\xi-\tilde\beta+\tilde g(x,t),\quad\forall m\in\{0\}\cup\mathbb{N},\quad\ae x\in X,\quad\ae t\in\mathbb{R}_+,
$$
with $\tilde\beta\doteq\|\tilde g\|_{L^\infty(X\times\mathbb{R}_+)}$. In fact, there exist $C>0$ and $k\in(0,1)$ such that
$$
0\le v_{m+1}(x,t)-v(x,t)\le C\,k^m,\quad\ae x\in X,\quad\ae t\in\mathbb{R}_+,\quad\forall m\in\mathbb{N}.
$$
\end{corollary}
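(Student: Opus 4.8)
The plan is to reduce Corollary \ref{Existencel=0Corr} to Theorem \ref{Existencel=0Theorem} by a vertical shift of size $\beta_0$. First I would set $g(x,t)\doteq\tilde g(x,t)+\beta_0$ and $G(s)\doteq\tilde G(s-\beta_0)$ for $s\in[0,+\infty)$, and observe that $v$ solves \eqref{IntEqAlt} if and only if $u\doteq v+\beta_0$ solves \eqref{IntEq} with this $g$ and this $G$: substituting $v=u-\beta_0$ into \eqref{IntEqAlt} and adding $\beta_0$ to both sides produces exactly \eqref{IntEq}, since the nonlinear term is $h\,\tilde G(v)=h\,G(u)$ and the free term becomes $\tilde g+\beta_0=g$. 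Note that the shift itself does not require stochasticity; the hypothesis $\lambda_\pm=0$ enters only through the invocation of Theorem \ref{Existencel=0Theorem}.

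Next I would check that $g$, $G$, $h$ and $\varphi$ satisfy all hypotheses of Theorem \ref{Existencel=0Theorem}, including the additional ones, so that the estimate $0\le u_{m+1}-u\le Ck^m$ is available. Measurability and boundedness of $g$ come from Assumptions $\tilde g$; since $\tilde g\ge0$ we get $g\ge\beta_0>0$, which gives Assumptions $g$: \eqref{gAssumptions_ii} (valid for any $T_0$), and also $\|g\|_{L^\infty(X\times\mathbb{R}_+)}=\tilde\beta+\beta_0\doteq\beta$, where $\tilde\beta\doteq\|\tilde g\|_{L^\infty(X\times\mathbb{R}_+)}$. Assumptions $\tilde G$ translate into Assumptions $G$ for $G(s)=\tilde G(s-\beta_0)$ (continuity on $[0,+\infty)$, $G(0)=\tilde G(-\beta_0)=0$, strict monotonicity, concavity). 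The hypotheses on $h$, $p_1$, $p_2$ and $\varphi$ are literally the tilded ones \eqref{ExtraAssumptionsl=0Alt_i}--\eqref{ExtraAssumptionsl=0Alt_iv}. For \eqref{ExtraAssumptionsl=0_v} I would take $\xi\doteq\tilde\xi+\beta_0>0$ with $\tilde\xi$ from \eqref{ExtraAssumptionsl=0Alt_v}, so that $\xi-G(\xi)=\tilde\xi+\beta_0-\tilde G(\tilde\xi)=\|\tilde g\|_{L^\infty}+\beta_0=\|g\|_{L^\infty}$.

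The one point requiring care is obtaining \eqref{ExtraAssumptionsl=0_vi} from \eqref{ExtraAssumptionsl=0Alt_vi}. Given $\sigma\in[0,1]$ and $u\in[0,\xi]$, I would write $v\doteq u-\beta_0\in[-\beta_0,\tilde\xi]$ and use the algebraic identity $\sigma u-\beta_0=\sigma v-(1-\sigma)\beta_0$ to obtain $G(\sigma u)=\tilde G(\sigma v-(1-\sigma)\beta_0)\ge\varphi(\sigma)\tilde G(v)=\varphi(\sigma)G(u)$; this is precisely why \eqref{ExtraAssumptionsl=0Alt_vi} is stated in that form, and it is the main (though still elementary) obstacle. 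With all hypotheses verified, Theorem \ref{Existencel=0Theorem} delivers $u\in L^\infty(X\times\mathbb{R}_+,[0,+\infty))$ solving \eqref{IntEq}, the monotone approximating sequence $\{u_m\}$, and constants $C>0$, $k\in(0,1)$ with $0\le u_{m+1}-u\le Ck^m$.

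Finally I would translate everything back. From \eqref{IntEq} and $K,h,G(u)\ge0$ we have $u\ge g\ge\beta_0$, so $v\doteq u-\beta_0\in L^\infty(X\times\mathbb{R}_+,[0,+\infty))$ solves \eqref{IntEqAlt}. Putting $v_m\doteq u_m-\beta_0$ and using $\beta=\tilde\beta+\beta_0$ and $\xi=\tilde\xi+\beta_0$, the zeroth iterate becomes $v_0=\xi-\beta+g-\beta_0=\tilde\xi-\tilde\beta+\tilde g$ and the recursion becomes $v_{m+1}=\tilde g+\int_0^t\int_XK(x,y;t-s)h(y,s)\tilde G(v_m(y,s))\,d\mu(y)\,ds$, i.e. exactly the sequence stated in the corollary; it is decreasing and $0\le v_{m+1}-v=u_{m+1}-u\le Ck^m$, which is the claimed uniform convergence. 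No further analysis is needed beyond this change of variables.
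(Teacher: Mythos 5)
Your proposal is correct and takes essentially the same route as the paper: the paper's proof of Corollary \ref{Existencel=0Corr} is exactly the vertical shift $g\doteq\tilde g+\beta_0$, $G(u)\doteq\tilde G(u-\beta_0)$, followed by an appeal to Theorem \ref{Existencel=0Theorem} and the back-substitution $v\doteq u-\beta_0$, $v_m\doteq u_m-\beta_0$. You merely make explicit the verifications (in particular the identity $\sigma u-\beta_0=\sigma v-(1-\sigma)\beta_0$ turning \eqref{ExtraAssumptionsl=0Alt_vi} into \eqref{ExtraAssumptionsl=0_vi}, and $u\ge g\ge\beta_0$ giving $v\ge0$) that the paper dismisses as ``easy to verify.''
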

\begin{proof} Let $\beta_0>0$ be as in {\normalfont\textbf{Assumptions}} \hyperref[GAssumptionsGenericAlt]{$\tilde G$}, and define $g\doteq\tilde g+\beta_0$ and $G:[0,+\infty)\to[0,+\infty)$ by $G(u)\doteq\tilde G(u-\beta_0)$ for all $u\in[0,+\infty)$. It is then easy to verify that all assumptions appearing in Theorem \ref{Existencel=0Theorem} are satisfied, producing a solution $u$ to the equation (\ref{IntEq}) together with the uniform approximation scheme $\{u_m\}_{m=1}^\infty$. It remains to check that $v\doteq u-\beta_0$ is a solution to the equation (\ref{IntEqAlt}), and $\{v_m\}_{m=1}^\infty$ is the desired approximating sequence with $v_m\doteq u_m-\beta_0$ for $m\in\mathbb{N}$. This completes the proof.
\end{proof}

\begin{corollary}\label{Existencel>0Corr} Suppose that the following assumptions hold:
\begin{itemize}

\item {\normalfont\textbf{Assumptions}} \hyperref[KAssumptions]{$K$} with $\lambda_\pm>0$;

\item {\normalfont\textbf{Assumptions}} \hyperref[gAssumptionsAlt]{$\tilde g$};

\item {\normalfont\textbf{Assumptions}} \hyperref[l*AssumptionsGeneric]{$h$};

\item {\normalfont\textbf{Assumptions}} \hyperref[GAssumptionsGenericAlt]{$\tilde G$};

\item {\normalfont\textbf{Assumptions}} \hyperref[ExtraAssumptionsl>0Alt]{$\widetilde{\lambda_\pm>0}$}.

\end{itemize}
Then
$$
\exists\,v\in L^\infty(X\times\mathbb{R}_+,[0,+\infty)),
$$
such that $v$ satisfies the equation (\ref{IntEqAlt}). Moreover, $v$ is the uniform limit of the decreasing sequence $\{v_m\}_{m=1}^\infty$ of non-negative measurable functions, given by the following iterative approximations,
$$
v_{m+1}(x,t)\doteq\tilde g(x,t)+\int\limits_0^t\int\limits_XK(x,y;t-s)h(y,s)\tilde G(v_m(y,s))d\mu(y)ds,
$$
$$
v_0(x,t)\doteq\tilde\eta-\tilde\beta+\tilde g(x,t),\quad\forall m\in\{0\}\cup\mathbb{N},\quad\ae x\in X,\quad\ae t\in\mathbb{R}_+,
$$
with $\tilde\beta$ as in (\ref{ExtraAssumptionsl>0Alt_ii}). In fact, there exist $C>0$ and $k\in(0,1)$ such that
$$
0\le v_{m+1}(x,t)-v(x,t)\le C\,k^m,\quad\ae x\in X,\quad\ae t\in\mathbb{R}_+,\quad\forall m\in\mathbb{N}.
$$
\end{corollary}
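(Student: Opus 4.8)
The plan is to reduce Corollary \ref{Existencel>0Corr} to Theorem \ref{Existencel>0Theorem} by the same vertical-shifting device used in the proof of Corollary \ref{Existencel=0Corr}. First I would fix $\beta_0>0$ as in \textbf{Assumptions} \hyperref[GAssumptionsGenericAlt]{$\tilde G$}, and set $g\doteq\tilde g+\beta_0$ and $G:[0,+\infty)\to[0,+\infty)$, $G(u)\doteq\tilde G(u-\beta_0)$. The shift moves the base point $-\beta_0$ of $\tilde G$ to $0$, so that \eqref{GAssumptionsGenericAlt_i} and \eqref{GAssumptionsGenericAlt_ii} become exactly \textbf{Assumptions} \hyperref[GAssumptionsGeneric]{$G$}: continuity on $[0,+\infty)$, $G(0)=\tilde G(-\beta_0)=0$, and $G$ strictly increasing and concave (strict monotonicity and concavity are preserved by a translation of the argument). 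Also $g\ge\beta_0>0$ is measurable and, by \eqref{gAssumptionsAlt_iii}, $g\in L^\infty(X\times\mathbb{R}_+)$ with $\|g\|_{L^\infty}=\|\tilde g\|_{L^\infty}+\beta_0=\tilde\beta+\beta_0$, so \textbf{Assumptions} $g$: \hyperref[gAssumptions_i]{(i)} and \hyperref[gAssumptions_iii]{(iii)} hold.

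Next I would check that the additional hypotheses of Theorem \ref{Existencel>0Theorem} follow from \textbf{Assumptions} \hyperref[ExtraAssumptionsl>0Alt]{$\widetilde{\lambda_\pm>0}$}. Hypothesis \eqref{ExtraAssumptionsl>0_iii}, namely $\gamma=\|h\|_{L^\infty}<\infty$, is literally \eqref{ExtraAssumptionsl>0Alt_iii}; hypothesis \eqref{ExtraAssumptionsl>0_i}, $\alpha=\inf h>0$, is literally \eqref{ExtraAssumptionsl>0Alt_i}. For \eqref{ExtraAssumptionsl>0_ii} we must produce $\eta>0$ with $\gamma\,G(\eta)=(\eta-\beta)\lambda_-$ where $\beta=\|g\|_{L^\infty}=\tilde\beta+\beta_0$. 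Taking $\eta\doteq\tilde\eta+\beta_0$ (so $\eta>0$ since $\tilde\eta\in(-\beta_0,+\infty)$), we get $G(\eta)=\tilde G(\tilde\eta)$ and $\eta-\beta=\tilde\eta+\beta_0-\tilde\beta-\beta_0=\tilde\eta-\tilde\beta$, hence $\gamma\,G(\eta)=\gamma\,G(\tilde\eta)=(\tilde\eta-\tilde\beta)\lambda_-=(\eta-\beta)\lambda_-$ by \eqref{ExtraAssumptionsl>0Alt_ii}. Thus \eqref{ExtraAssumptionsl>0_ii} holds with this $\eta$. \textbf{Assumptions} \hyperref[l*AssumptionsGeneric]{$h$} and \textbf{Assumptions} \hyperref[KAssumptions]{$K$} with $\lambda_\pm>0$ are common to both statements and need no translation; note $\lambda_-=\lambda_+>0$ or more generally $\lambda_+\ge\lambda_->0$ is untouched by the shift.

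Having verified all hypotheses, Theorem \ref{Existencel>0Theorem} yields $u\in L^\infty(X\times\mathbb{R}_+,[0,+\infty))$ solving \eqref{IntEq} with the stated $g$, $G$, together with the decreasing iteration $u_{m+1}(x,t)=g(x,t)+\int_0^t\int_X K(x,y;t-s)h(y,s)G(u_m(y,s))\,d\mu(y)\,ds$, $u_0=\eta-\beta+g$, and the geometric bound $0\le u_{m+1}-u\le C k^m$. I would then set $v\doteq u-\beta_0$ and $v_m\doteq u_m-\beta_0$. Since $u\ge\beta_0$ (from $u=v+g\ge g\ge\beta_0$, or rather from $u\ge g\ge\beta_0$ because the integral term is non-negative), $v$ and each $v_m$ are non-negative and measurable, and $v\in L^\infty$. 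Substituting $u=v+\beta_0$, $g=\tilde g+\beta_0$, $G(u_m)=\tilde G(u_m-\beta_0)=\tilde G(v_m)$ into the fixed-point equation and the iteration gives precisely \eqref{IntEqAlt} and the iteration of Corollary \ref{Existencel>0Corr}, with $v_0=u_0-\beta_0=\eta-\beta+g-\beta_0=\tilde\eta-\tilde\beta+\tilde g$ (using $\eta-\beta=\tilde\eta-\tilde\beta$). Monotone decrease of $\{v_m\}$ and the bound $0\le v_{m+1}-v=u_{m+1}-u\le C k^m$ transfer verbatim.

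The argument is essentially bookkeeping, so there is no serious obstacle; the only point requiring a moment's care is confirming that the translated data indeed satisfy \emph{all} of \textbf{Assumptions} \hyperref[GAssumptionsGeneric]{$G$}, \textbf{Assumptions} $g$, and \textbf{Assumptions} \hyperref[ExtraAssumptionsl>0]{$\lambda_\pm>0$} simultaneously — in particular that the choice $\eta=\tilde\eta+\beta_0$ is consistent with the value of $\beta=\|g\|_{L^\infty}$ produced by the shift, which is exactly what the computation $\eta-\beta=\tilde\eta-\tilde\beta$ above secures. Once that is in place, invoking Theorem \ref{Existencel>0Theorem} and unwinding the substitution completes the proof.
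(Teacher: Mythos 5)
Your proposal is correct and follows exactly the paper's route: the paper proves this corollary by the same vertical shift $g=\tilde g+\beta_0$, $G(u)=\tilde G(u-\beta_0)$, $v=u-\beta_0$ used for Corollary \ref{Existencel=0Corr}, now invoking Theorem \ref{Existencel>0Theorem}. Your explicit verification that $\eta=\tilde\eta+\beta_0$ satisfies \eqref{ExtraAssumptionsl>0_ii} via $\eta-\beta=\tilde\eta-\tilde\beta$, and that $g\ge\beta_0$ supplies Assumption $g$ \hyperref[gAssumptions_ii]{(ii)}, fills in details the paper leaves implicit.
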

\begin{proof} The proof is very similar to that of Corollary \ref{Existencel=0Corr} above, but now based on Theorem \ref{Existencel>0Theorem}.
\end{proof}

The advantage of (\ref{IntEqAlt}) over (\ref{IntEq}) is that in the corresponding Cauchy problem (\ref{CauchyProblemAlt}), $\inf v_0$ does not have to be strictly positive, in contrast to (\ref{CauchyProblem}). This comes at the expense of perhaps less natural structure of the non-linearity $\tilde G$ as required in {\normalfont\textbf{Assumptions}} \hyperref[GAssumptionsGenericAlt]{$\tilde G$}.

\section{Uniqueness of solutions}

In this section we will show that the solutions obtained in the previous section are unique.

\begin{theorem}\label{Uniquenessl=0Theorem} Suppose that all assumptions appearing in Theorem \ref{Existencel=0Theorem} are true. Then the equation (\ref{IntEq}) cannot have two distinct non-negative solutions in $L^\infty(X\times\mathbb{R}_+)$.
\end{theorem}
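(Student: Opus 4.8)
The plan is to argue by comparison, exploiting the concavity of $G$ and the stochasticity/substochasticity bound on $K$ to show that any non-negative bounded solution is squeezed between the decreasing iterates produced in Theorem \ref{Existencel=0Theorem}. Suppose $u$ and $w$ are two non-negative solutions in $L^\infty(X\times\mathbb{R}_+)$, and set $v\doteq u-g$, $\tilde v\doteq w-g$, so that both satisfy the fixed-point equation \eqref{Eq34}. The first step is to show that any such solution is dominated by $u_0$: since $u$ satisfies \eqref{IntEq}, one has $u\le g+G(\|u\|_\infty)\int_0^t\int_X K\,h\,d\mu\,ds\le g+\xi-\beta$ once one knows $\|u\|_\infty\le\xi$, which itself follows by a bootstrap using $\xi-G(\xi)=\|g\|_\infty$ and monotonicity of $G$ — essentially the same argument that gave $v_1\le v_0$ in the existence proof. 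Hence $u\le u_0$, and then feeding this back through the iteration map (which is monotone by monotonicity of $G$) gives $u\le u_m$ for all $m$, so $u\le\lim_m u_m=v+g$, the constructed solution. Thus every solution lies below the constructed one.

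The heart of the matter is the reverse inequality. Here I would mimic the two-sided estimates of Theorem \ref{Existencel=0Theorem} but now comparing an arbitrary solution $u$ with the constructed solution, call it $u^\star$. Since $u\le u^\star$ pointwise a.e.\ and both are bounded below — on $X\times(0,T_0)$ by $\beta_0>0$ coming from {\normalfont\textbf{Assumptions}} $g$: \hyperref[gAssumptions_ii]{(ii)}, and on $X\times[T_0,\infty)$ by the lower bounds \eqref{Eq8}--\eqref{Eq9} type estimates applied to the fixed point itself — there exists $\sigma\in(0,1)$ with $\sigma\,u^\star\le u$ a.e., where $\sigma$ is obtained exactly as $\sigma^\ast$ in \eqref{Eq22} by splitting into the regions $(0,T_0)$ and $[T_0,\infty)$ and using that $G(\beta_0)>0$, the normalization $\int_0^\infty p_2=1$, and $\xi-G(\xi)=\|g\|_\infty$. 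Then applying $G$ and using the superhomogeneity bound \eqref{ExtraAssumptionsl=0_vi}, $G(\sigma\,u^\star)\ge\varphi(\sigma)G(u^\star)$ on $[0,\xi]$, and plugging into the integral equations yields $\varphi(\sigma)\,u^\star\le u$ wherever the first inequality held; iterating, $\varphi^{(m)}(\sigma)u^\star\le u$, and since $\varphi$ is a concave homeomorphism of $[0,1]$ fixing $0$ and $1$, the inequality \eqref{Eq28} from \cite{khach} gives $\varphi^{(m)}(\sigma)\to1$. Hence $u^\star\le u$ a.e., so $u=u^\star$.

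The main obstacle I anticipate is verifying that an arbitrary solution $u$ actually satisfies the lower bound needed to extract $\sigma>0$ on the region $[T_0,\infty)$: the existence proof got its lower bounds from the explicit structure of $v_1$, which an unknown solution need not share a priori. The fix is to observe that a fixed point $u$ of \eqref{IntEq} automatically satisfies $u\ge g\ge\beta_0$ on $X\times(0,T_0)$, hence on that strip $u\ge g$ and we can run one step of the lower estimate to propagate positivity to all of $\mathbb{R}_+$ just as in \eqref{Eq8}--\eqref{Eq11} — the point being that the lower bound argument there used only the equation, {\normalfont\textbf{Assumptions}} $g$, {\normalfont\textbf{Assumptions}} $K$ and \eqref{ExtraAssumptionsl=0_iii}, all of which hold for $u$ — so $u$ inherits a strictly positive lower bound on every $X\times[T_0,N]$ and in fact a uniform one after combining with \eqref{ExtraAssumptionsl=0_i}. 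A secondary subtlety is ensuring all intermediate quantities stay in the interval $[0,\xi]$ where \eqref{ExtraAssumptionsl=0_vi} is available; this is guaranteed by the first step $u\le u^\star\le\xi-\beta+g\le\xi$. With these two points dispatched, the rest is a verbatim repetition of the contraction-in-$\varphi$ mechanism from the proof of Theorem \ref{Existencel=0Theorem}.
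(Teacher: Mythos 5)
Your proposal is correct and follows essentially the same route as the paper's proof: an a priori bound $\sup u\le\xi$ forcing any bounded non-negative solution below the constructed iterates $u_m$, a two-sided ratio estimate $\sigma\,u^\star\le u\le u^\star$ obtained by splitting into $(0,T_0)$ and $[T_0,+\infty)$ and using the lower bounds that the equation itself imposes via $g\ge\beta_0$ on the strip, and then the $\varphi$-iteration combined with the inequality \eqref{Eq28} from \cite{khach}. The one place where your justification as written is too weak is the bootstrap for $\sup u\le\xi$: mere monotonicity of $G$ does not close it, since one needs that $u\mapsto G(u)/u$ is decreasing (a consequence of concavity together with $G(0)=0$) in order to derive a contradiction from $C^*\le G(C^*+\beta)$ combined with the assumption $C^*>\xi-\beta$, which is exactly how the paper argues in \eqref{Eq53}--\eqref{Eq55}.
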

\begin{proof}
We retain the terminology and notations introduced in the proof of Theorem \ref{Existencel=0Theorem}. Since the equations (\ref{IntEq}) and (\ref{Eq34}) are equivalent, we will present a proof of the uniqueness of the solution for the equation (\ref{Eq34}).
We assume that the equation (\ref{Eq34}), besides the solution $v$ (constructed by means of successive approximations (\ref{Eq5})), has also another non-negative, measurable solution $v^*$ bounded on $X\times \mathbb{R}_+$. First, taking into account {\normalfont\textbf{Assumptions}} \hyperref[GAssumptionsGeneric]{$G$} and the Assumptions (\ref{gAssumptions_ii}) and (\ref{ExtraAssumptionsl=0_iii}), we estimate the second solution $v^*$ from below. For $\ae x\in X$, $\ae t\in (0, T_0)$, we have
\begin{equation}
v^*(x,t)\geq \int\limits_0^t\int\limits_X K(x,y;t-s)h(y,s)G(g(y,s))d\mu(y)ds\overset{(\ref{gAssumptions_ii}),(\ref{ExtraAssumptionsl=0_iii})}{\geq} G(\beta_0)\int\limits_0^t p_1(s)ds,
\label{Eq48}
\end{equation}
while for $\ae x\in X$, $\ae t\in [T_0,+\infty)$,
\begin{equation}
\begin{array}{c}
\displaystyle v^*(x,t)\overset{(\ref{ExtraAssumptionsl=0_iii})}{\geq} \int_0^t\limits p_1(s) \int\limits_X K(x,y;t-s)G(g(y,s))d\mu(y)ds \\
\displaystyle \overset{(\ref{gAssumptions_ii})}{\geq} \int\limits_0^{T_0}p_1(s)\int\limits_X K(x,y;t-s)G(g(y,s))d\mu(y)ds\geq G(\beta_0)\int\limits_0^{T_0}p_1(s)ds.
\label{Eq49}
\end{array}
\end{equation}
Now we prove the validity of the following inequality:
\begin{equation}
v^*(x,t)\leq v(x,t), \quad \ae x\in X, \quad \ae t\in \mathbb{R}_+.
\label{Eq50}
\end{equation}
For this purpose we will first prove the validity of the following estimate from above,
\begin{equation}
v^*(x,t)\leq \xi-\beta, \quad \ae  x\in X, \quad \ae  t\in \mathbb{R}_+.
\label{Eq51}
\end{equation}
Denote by
\begin{equation}
0<C^* \doteq\sup\limits_{(x,t)\in X\times \mathbb{R}_+} v^*(x,t).
\label{Eq52}
\end{equation}
Then, from (\ref{Eq34}) combined with {\normalfont\textbf{Assumptions}} \hyperref[GAssumptionsGeneric]{$G$} and the Assumptions (\ref{ExtraAssumptionsl=0_i}) and (\ref{ExtraAssumptionsl=0_iii}), it follows that
$$0\leq v^*(x,t)\leq \int\limits_0^t\int\limits_X K(x,y;t-s)h(y,s)G(C^*+g(y,s))d\mu(y)ds$$
$$\leq G(C^*+\beta)\int\limits_0^t p_2(s)ds<G(C^*+\beta), \quad \ae  x\in X, \quad \ae  t\in \mathbb{R}_+,$$
whence we immediately get
\begin{equation}
C^*\leq G(C^*+\beta).
\label{Eq53}
\end{equation}
Let us show that
\begin{equation}
C^*\leq \xi-\beta.
\label{Eq54}
\end{equation}
Suppose the opposite is true: $C^*>\xi-\beta.$ Then, taking into account the fact that the function $u\mapsto\dfrac{G(u)}{u}$ is monotonically decreasing on $\mathbb{R}_+$ (which immediately follows from {\normalfont\textbf{Assumptions}} \hyperref[GAssumptionsGeneric]{$G$}), we get
$$\frac{G(C^*+\beta)}{C^*+\beta}\le\frac{G(\xi)}{\xi}=\frac{\xi-\beta}{\xi},$$
whence it follows that
\begin{equation}
G(C^*+\beta)\le\left(1-\frac{\beta}{\xi}\right)(C^*+\beta)=C^*+\beta-\frac{C^*\beta}{\xi}-\frac{\beta^2}{\xi}<C^*,
\label{Eq55}
\end{equation}
because, by assumption, $C^*>\xi-\beta.$ But the inequality (\ref{Eq55}) contradicts the inequality (\ref{Eq53}). Hence, the inequality  (\ref{Eq54}) holds true, implying the estimate (\ref{Eq51}). Now, using the inequality (\ref{Eq51}) and the formula (\ref{Eq5}), by induction in $m$, it is easy to prove the validity of the following inequalities:
\begin{equation}
v^*(x,t)\leq v_m(x,t), \quad \forall m\in \{0\}\cup \mathbb{N}, \quad \ae
 x\in X, \quad \ae  t\in \mathbb{R}_+.
\label{Eq56}
\end{equation}
In the inequality (\ref{Eq56}), tending $m$ to infinity, we arrive at the estimate (\ref{Eq50}). From (\ref{Eq32}), together with {\normalfont\textbf{Assumptions}} \hyperref[GAssumptionsGeneric]{$G$} and the Assumptions (\ref{ExtraAssumptionsl=0_iii}), (\ref{ExtraAssumptionsl=0_v}), it also follows that
\begin{equation}
\begin{array}{c}
\displaystyle v(x,t)\leq \int\limits_0^t\int\limits_X K(x,y;t-s)h(y,s)G(\xi-\beta+g(y,s))d\mu(y)ds \\
\displaystyle \leq (\xi-\beta)\int\limits_0^t p_2(s)ds, \quad \ae  x\in X, \quad  \ae  t\in \mathbb{R}_+.
\label{Eq57}
\end{array}
\end{equation}
Thus, by virtue of (\ref{Eq48}), (\ref{Eq50}) and (\ref{Eq57}), for $\ae x\in X$, $\ae t\in(0,T_0)$ we come to the inequalities
\begin{equation}
 \mathcal{B}(t)\leq \frac{v^*(x,t)}{v(x,t)}\leq 1,
\label{Eq58}
\end{equation}
where
\begin{equation}
 \mathcal{B}(t)\doteq\frac{G(\beta_0)\int\limits_0^t p_1(s)ds}{(\xi-\beta) \int\limits_0^t p_2(s)}ds, \quad \forall  t\in \mathbb{R}_+, \label{Eq59}
\end{equation}
whereas for $\ae x\in X$, $\ae t\in [T_0, +\infty)$, due to (\ref{Eq49}),  (\ref{Eq50}) and  (\ref{Eq32}), we get the chain of inequalities
\begin{equation}
\frac{G(\beta_0)\int\limits_0^{T_0}p_1(s)ds}{\xi-\beta}\leq \frac{v^*(x,t)}{v(x,t)}\leq 1.
\label{Eq60}
\end{equation}
From
(\ref{Eq59}), due to Assumptions (\ref{ExtraAssumptionsl=0_i}) and (\ref{ExtraAssumptionsl=0_ii}),  and applying L'Hôpital's rule, we have
\begin{equation}
\mathcal{B} \in C(\mathbb{R}_+), \quad \mathcal{B}(t)<1, \quad \forall  t\in \mathbb{R}_+,
\label{Eq61}
\end{equation}
\begin{equation}
\lim\limits_{t\rightarrow +\infty}\mathcal{B}(t)=\frac{G(\beta_0)}{\xi-\beta}\int\limits_0^{\infty}p_1(s)ds<\frac{G(\beta_0)}{\xi-\beta}\in(0,1), \label{Eq62}
\end{equation}
\begin{equation}
\lim\limits_{t\rightarrow 0^+}\mathcal{B}(t)=\frac{G(\beta_0)}{\xi-\beta}\lim\limits_{t\rightarrow 0^+}\frac{p_1(t)}{p_2(t)}\in (0,1). \label{Eq63}
\end{equation}
It is then obvious that the function $\mathcal{B}$ can be extended continuously to the domain $[0,+\infty]$, and one can say that there exists a number $\tilde{\sigma}_1\in (0,1),$ such that
\begin{equation}
\mathcal{B}(t)\geq \tilde{\sigma}_1, \quad\forall t\in [0,+\infty).\label{Eq64}
\end{equation}
We put
$$
\tilde{\sigma}_2 \doteq\frac{G(\beta_0)\int\limits_0^{T_0}p_1(s)ds}{\xi-\beta}\in (0,1).
$$
and denote by
$$\tilde{\sigma}\doteq\min\{\tilde{\sigma}_1,\tilde{\sigma}_2\}\in (0,1).$$
Then, taking into account (\ref{Eq58})-(\ref{Eq60}) and (\ref{Eq64}), we arrive at inequalities
\begin{equation}
\tilde{\sigma}v(x,t)\leq v^*(x,t)\leq v(x,t), \quad \ae  x\in X, \quad \ae  t\in \mathbb{R}_+.\label{Eq65}
\end{equation}
Further, using the Assumptions (\ref{ExtraAssumptionsl=0_iv}) and (\ref{ExtraAssumptionsl=0_vi}),
and performing steps similar to those in the proof of inequality (\ref{Eq30}), we come to the estimate
\begin{equation}
0\leq v(x,t)-v^*(x,t)\leq C_1 k_1^m,\quad \forall
 m\in \mathbb{N},  \quad \ae   x\in X, \quad \ae    t\in \mathbb{R}_+ ,\label{Eq66}
\end{equation}
where $C_1>0, \,\, k_1\in(0,1)$ are constants.

In the inequality (\ref{Eq66}), taking $m$ to infinity, we obtain that
$$
v(x,t)=v^*(x,t), \quad \ae x\in X, \quad \ae t\in \mathbb{R}_+.
$$
The theorem is proved. $\Box$
\end{proof}

\begin{theorem} Suppose that all assumptions appearing in Theorem \ref{Existencel>0Theorem} are true. Then the equation (\ref{IntEq}) cannot have two distinct non-negative solutions in $L^\infty(X\times\mathbb{R}_+)$.
\end{theorem}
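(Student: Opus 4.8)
The plan is to argue directly with the integral equation (\ref{IntEq}) by means of a Gronwall-type estimate on each finite time horizon, exploiting that in the substochastic regime $h$ is genuinely bounded (Assumption (\ref{ExtraAssumptionsl>0_iii})), which makes the non-linearity Lipschitz \emph{along} the solutions. Let $u,u^*\in L^\infty(X\times\mathbb{R}_+,[0,+\infty))$ be two solutions of (\ref{IntEq}); subtracting the two copies of (\ref{IntEq}) gives, for $\ae(x,t)$,
$$
u(x,t)-u^*(x,t)=\int_0^t\int_X K(x,y;t-s)h(y,s)\bigl[G(u(y,s))-G(u^*(y,s))\bigr]d\mu(y)\,ds.
$$
Write $\Psi(t)\doteq\|u(\cdot,t)-u^*(\cdot,t)\|_{L^\infty(X)}$. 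First I would record two elementary facts. \textbf{(a)} Since $0\le u(x,t)-g(x,t)=\int_0^t\int_X K h\,G(u)\,d\mu\,ds\le\gamma\,G(\|u\|_{L^\infty(X\times\mathbb{R}_+)})\int_0^t e^{-(t-s)\lambda_-}ds$ by {\normalfont\textbf{Assumptions}} \hyperref[KAssumptions]{$K$} and (\ref{ExtraAssumptionsl>0_iii}), and the same for $u^*$, we get $\Psi(t)\to0$ as $t\to0^+$. \textbf{(b)} Fix $T>0$. By {\normalfont\textbf{Assumptions}} $g$: \hyperref[gAssumptions_ii]{(ii)} we have $u(x,t),u^*(x,t)\ge g(x,t)\ge\beta_0$ for $\ae t\in(0,T_0)$; and for $\ae t\in[T_0,T]$, restricting the $s$-integration to $(0,T_0)$ and using $h\ge\alpha$ (Assumption (\ref{ExtraAssumptionsl>0_i})), the monotonicity of $G$, and the lower bound in (\ref{KAssumptions_ii}),
$$
u(x,t)\ge\alpha\,G(\beta_0)\int_0^{T_0}\!\int_X K(x,y;t-s)\,d\mu(y)\,ds\ge\frac{\alpha\,G(\beta_0)}{\lambda_+}\bigl(e^{-(t-T_0)\lambda_+}-e^{-t\lambda_+}\bigr)\ge\ell_T>0,
$$
with $\ell_T\doteq\min\bigl\{\beta_0,\ \alpha G(\beta_0)\lambda_+^{-1}(e^{-(T-T_0)\lambda_+}-e^{-T\lambda_+})\bigr\}$, the last step because $t\mapsto e^{-(t-T_0)\lambda_+}-e^{-t\lambda_+}$ is decreasing on $[T_0,+\infty)$; and the same bound holds for $u^*$.

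The second step uses concavity. Since $G(0)=0$ and $G$ is concave and strictly increasing, $u\mapsto G(u)/u$ is non-increasing on $\mathbb{R}_+$ (this is already observed in the proof of Theorem \ref{Uniquenessl=0Theorem}), so $G$ is Lipschitz on $[\ell_T,+\infty)$ with constant $G(\ell_T)/\ell_T$. Together with \textbf{(b)} and the upper bound $\int_X K(x,y;t-s)\,d\mu(y)\le e^{-(t-s)\lambda_-}\le1$ from (\ref{KAssumptions_ii}), this yields, for $\ae(x,t)\in X\times(0,T]$,
$$
|u(x,t)-u^*(x,t)|\le\frac{\gamma\,G(\ell_T)}{\ell_T}\int_0^t\int_X K(x,y;t-s)\,|u(y,s)-u^*(y,s)|\,d\mu(y)\,ds\le C_T\int_0^t\Psi(s)\,ds,
$$
where $C_T\doteq\gamma\,G(\ell_T)/\ell_T\in(0,+\infty)$. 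Taking the essential supremum over $x$ on the left, $\Psi(t)\le C_T\int_0^t\Psi(s)\,ds$ for $t\in(0,T]$, with $\Psi$ bounded on $(0,T]$ and $\Psi(0^+)=0$ by \textbf{(a)}. Gronwall's lemma — or simply iterating the inequality, which gives $\Psi(t)\le (C_T t)^n(n!)^{-1}\sup_{(0,T]}\Psi$ for every $n$ — forces $\Psi\equiv0$ on $(0,T]$. Since $T>0$ is arbitrary, $u=u^*$ almost everywhere on $X\times\mathbb{R}_+$, which is the assertion.

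The only point I expect to require attention is step \textbf{(b)}: in the stochastic case of Theorem \ref{Uniquenessl=0Theorem} the normalisation $\int_X K\,d\mu\ge1$ yields a lower bound on the solutions uniform in time, whereas here $\int_X K\,d\mu\ge e^{-t\lambda_+}$ only gives $\ell_T$, which deteriorates as $T\to\infty$ (so $C_T\to\infty$); this is why the Gronwall estimate must be run separately on each $[0,T]$ rather than once and for all. If one prefers to keep the method of Theorem \ref{Uniquenessl=0Theorem}, the argument there transposes essentially verbatim after replacing $\int_X K\,d\mu\le1$ by $\int_X K\,d\mu\le e^{-t\lambda_-}$, the profile functions $p_1,p_2$ by the constants $\alpha,\gamma$, and the relation $\xi-G(\xi)=\|g\|_{L^\infty(X\times\mathbb{R}_+)}$ by $\gamma\,G(\eta)=(\eta-\beta)\lambda_-$, with the postulated concave homeomorphism $\varphi$ now supplied by $\varphi(\sigma)\doteq\inf_{0<u\le\eta}G(\sigma u)/G(u)$ — which is a concave homeomorphism of $[0,1]$ precisely because $G$ is concave, increasing and vanishes at $0$ — and with Theorem \ref{Existencel>0Theorem} and inequality (3.16) of \cite{khach} used as before.
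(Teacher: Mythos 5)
Your Gronwall argument is correct, and it takes a genuinely different route from the paper's. The paper proves this theorem by declaring it analogous to Theorem \ref{Uniquenessl=0Theorem}: there one shows any second solution $v^*$ satisfies $v^*\le v_m$ for every iterate (via the monotonicity of $u\mapsto G(u)/u$), hence $v^*\le v$, then derives a two-sided ratio bound $\tilde\sigma v\le v^*\le v$ from the lower bounds on $h$ and on $\int_XK\,d\mu$, and finally contracts the gap by iterating the concavity modulus $\varphi$ with inequality (3.16) of \cite{khach}. Your closing remark correctly pinpoints the one detail the paper leaves implicit in the word ``analogous'': in the substochastic case no $\varphi$ appears among the hypotheses, so it must be manufactured from $G$, and your $\varphi(\sigma)=\inf_{0<u\le\eta}G(\sigma u)/G(u)$ does this (an infimum of concave increasing functions of $\sigma$, pinned at $0$ and $1$, hence a concave homeomorphism). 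Your main proof sidesteps all of this machinery: the positive lower bound $\ell_T$ on both solutions over $X\times(0,T]$ --- which is exactly where Assumptions $g$:\,\hyperref[gAssumptions_ii]{(ii)} and \eqref{ExtraAssumptionsl>0_i} enter, and without which $G$ (e.g.\ $u^\alpha$) is not Lipschitz near $0$ and the argument would collapse --- turns the concave nonlinearity into a Lipschitz one with constant $G(\ell_T)/\ell_T$, and Gronwall on each finite horizon finishes; your observation that $\ell_T$ degenerates as $T\to\infty$, so the estimate must be run horizon by horizon, is the right precaution. What your approach buys is a shorter, self-contained proof that never touches the approximating sequence, the ratio estimates, or the $\varphi$-iteration; what the paper's method offers in exchange is uniformity of its contraction over all of $\mathbb{R}_+$ and a template that also covers the stochastic case, where $\inf h$ may vanish and your Lipschitz constant would not be available. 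The only loose end is the measurability of $t\mapsto\Psi(t)$ needed to integrate it; replacing $\Psi(t)$ by the non-decreasing (hence measurable) function $\operatorname*{ess\,sup}_{X\times(0,t]}|u-u^*|$ closes this without altering the iteration $\Psi(t)\le (C_Tt)^n(n!)^{-1}\sup\Psi$.
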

\begin{proof} The proof is analogous to that of Theorem \ref{Uniquenessl=0Theorem}. $\Box$
\end{proof}

\section{Discussion of assumptions on the kernel $K$}
\label{SEC:K}

The above statements about the existence and uniqueness of solutions to the integral equation in question were made under many technical assumptions about various parts of the equation. The most decisive among these, in terms of the applicability of our methods in a wider context, are the assumptions on the kernel $K$ as in {\normalfont\textbf{Assumptions}} \hyperref[KAssumptions]{$K$}. Namely, that $K$ is non-negative and stochastic or substochastic. If we interpret solutions of the integral equation (\ref{IntEq}) as the mild solutions of the parabolic equation (\ref{CauchyProblem}), then the question is: for which operators $\mathcal{L}$, the heat semigroup $\{e^{-t\mathcal{L}}\}_{t\in\mathbb{R}_+}$ admits such a heat kernel $K$?

The most basic requirement is that $\mathcal{L}$ be a non-negative operator, in one sense or another, appropriate to the construction of the semigroup. The stochasticity condition in the Assumption (\ref{KAssumptions_ii}) can be formally written as $e^{-t\mathcal{L}}1=1$, where $1\in L^\infty(X)$ is the constant function equal to one. On the other hand, the (almost everywhere) positivity condition in the Assumption (\ref{KAssumptions_i}) can be formally written as $e^{-t\mathcal{L}}f\ge0$ for all $f\ge0$. These two conditions together essentially describe the definition of a \textit{Markov semigroup} $\{e^{-t\mathcal{L}}\}_{t\in\mathbb{R}_+}$.

There are several important contexts where the operator $\mathcal{L}$ does generate a Markov heat semigroup, some of which we recall below. We note that the only requirements on $\mathcal{L}$ are those that guarantee our {\normalfont\textbf{Assumptions}} \hyperref[KAssumptions]{$K$} on the corresponding  heat kernel $K$.
\begin{example}
$\mathcal{L}=-\Delta$ for the Laplace-Beltrami operator $\Delta$ on a complete Riemannian manifold with Ricci curvature bounded from below. Indeed, by Theorem 5.2.6 (i) in \cite{Dav89}, we have $e^{-t\mathcal{L}}1=1$. Then, by Theorem 5.2.1 in \cite{Dav89}, the kernel $K$ is smooth and strictly positive. In fact, the non-negativity of $K$ holds without any assumptions on completeness or curvature, see Theorem 7.13 in \cite{Gri09}. And the stochasticity holds for complete connected Riemannian manifolds without restrictions on the curvature, but with the volume growth condition
$$
\int\limits^\infty\frac{rdr}{\ln V(r)}=+\infty,
$$
which allows volume growth as rapid as $\exp(c\;r^2)$, see Theorem 11.8 in \cite{Gri09}.
\end{example}

\begin{example}
The stochasticity holds also for sub-Laplacian operators $\Delta_\mathrm{sub}$ on sub-Rieman\-nian manifolds satisfying so-called curvature-dimension inequalities, see Theorem 2.19 in \cite{BaGa17}, but also Theorem 6 in \cite{Mun11}. This includes sub-Riemannian manifolds with transverse symmetries.
\end{example}

We note again that the only requirements on $\mathcal{L}$ are those that guarantee our {\normalfont\textbf{Assumptions}} \hyperref[KAssumptions]{$K$} on the corresponding  heat kernel $K$, that is, the heat kernel giving, e.g., a family of probability measures which are absolutely continuous with respect to the Haar measure on the group, if $X$ is a Lie group.

\begin{example}
Let $G$ be a connected Lie group $\mathfrak{g}$ its Lie algebra. Let $X_1,\ldots,X_d$ be a basis of $\mathfrak{g}$. Let $\mathcal L=-\sum_{i,j} a_{ij}X_i X_j$, where $a=(a_{ij})$ is positive semi-definite. By Hunt's theorem \cite{Hunt}, these operators lead to a semigroup family of probability measures $\nu_t$ corresponding to the operator $e^{-t\mathcal{L}}$. If the family of measures $\nu_t$ is Gaussian (an extra condition to be checked), then the theorem of Wehn (\cite[p. 261]{Wehn}) asserts that the family $\nu_t$ is absolutely continuous with respect to the Haar measure of $G$ if and only if the smallest Lie subalgebra of $\mathfrak{g}$, containing $\mathcal{L}$ in its universal enveloping algebra, coincides with $\mathfrak{g}$ (this is equivalent to H\"ormander's condition). We note here that the absolute continuity then holds simultaneously with respect to left and right Haar measures. This is the case if, for example, $\mathcal{L}$ is H\"ormander's sum of squares.

We note that often the drift terms can be also included into $\mathcal{L}$, see e.g. \cite{Hulanicki} and \cite{Siebert}, or the terms involving the Levi measure \cite{Applebaum, Applebaum-b}. In fact, the Gaussian convolution semigroup $\nu_t$ is non-degenerate if and only if the system $X_j$ is a so-called Siebert system, even for more general locally compact groups, in which case its densities are also strictly positive (see e.g. \cite[p. 314]{BSC}).
\end{example}

Thus, among other settings, our existence and uniqueness results apply to the mild solutions of the semilinear heat equations with concave non-linearity for Laplacians on complete Riemannian manifolds with curvature bounded from below, and for sub-Laplacians on Lie groups.

\end{document}